\newtheorem{theorem}{Theorem}[section]
\newtheorem{proposition}[theorem]{Proposition}
\newtheorem{corollary}[theorem]{Corollary}
\newtheorem{claim}[theorem]{Claim}
\newtheorem{problem}[theorem]{Problem}
\newtheorem{conjecture}[theorem]{Conjecture}
\theoremstyle{definition}
\newtheorem{remark}[theorem]{Remark}
\newtheorem{question}[theorem]{Question}
\newcommand{\PPC}{\mathsf{PPC}}
\newcommand{\IN}{\mathbb N}
\newcommand{\e}{\varepsilon}
\newcommand{\IZ}{\mathbb Z}
\newcommand{\IC}{\mathbb C}
\newcommand{\IT}{\mathbb T}
\newcommand{\IR}{\mathbb R}
\newcommand{\Cyclic}{\mathsf{Cyclic}}
\newcommand{\Boolean}{\mathsf{Boolean}}
\newcommand{\Dihedral}{\mathsf{Dihedral}}
\title{Difference bases in cyclic groups}
\author{Taras Banakh and Volodymyr Gavrylkiv}
\address[T.~Banakh]{Ivan Franko National University of Lviv (Ukraine),
and \newline Institute of Mathematics, Jan Kochanowski University in
Kielce (Poland)}
\email{t.o.banakh@gmail.com}
\address[V.~Gavrylkiv]{Vasyl Stefanyk Precarpathian National
University,
Ivano-Frankivsk, Ukraine} \email{vgavrylkiv@gmail.com}
\subjclass{05B10, 05E15, 20D60}
\keywords{finite group, cyclic group, difference basis,
difference characteristic}
\begin{document}

\begin{abstract} A subset $B$ of an Abelian group $G$ is called a {\em
difference basis} of $G$ if each element $g\in G$ can be written as the
difference $g=a-b$ of some elements $a,b\in B$. The smallest
cardinality $|B|$ of a difference basis $B\subset G$ is called the {\em
difference size} of $G$ and is denoted by $\Delta[G]$.
We prove that for every $n\in\IN$ the cyclic group $C_n$ of order $n$ has difference
size $\frac{1+\sqrt{4|n|-3}}2\le \Delta[C_n]\le\frac32\sqrt{n}$. If $n\ge 9$ (and $n\ge 2\cdot 10^{15}$), then $\Delta[C_n]\le\frac{12}{\sqrt{73}}\sqrt{n}$ (and  $\Delta[C_n]<\frac2{\sqrt{3}}\sqrt{n}$).
Also we calculate the difference sizes of all cyclic groups of
cardinality $\le 100$.
\end{abstract}
\maketitle

\section{Introduction}

A subset $B$ of a group $G$ is called a {\em difference basis} for a subset $A\subset G$ if each element $a\in A$ can be written as $a=xy^{-1}$ for some $x,y\in B$. If the group operation of $G$ is denoted by $+$, then the element $xy^{-1}$ is written as the difference $x-y$ (which justifies the choice of the terminology).

 The smallest cardinality of a difference basis for $A\subset G$ is called the {\em difference size} of $A$ and is denoted by $\Delta[A]$. For example, the set $\{0,1,4,6\}$ is a difference basis for the interval $A=[-6,6]\cap\IZ$ witnessing that $\Delta[A]\le 4$. In Proposition~\ref{p:BGN}(4) we shall prove that the difference size is subadditive in the sense that $\Delta[A\cup B]<\Delta[A]+\Delta[B]$ for any non-empty subsets $A,B$ of a group $G$.

The definition of a difference basis $B$ for a set $A$ in a group $G$ implies that $|A|\le |B|^2$ and hence $\Delta[A]\ge \sqrt{|A|}$. The fraction
$$\eth[A]:=\frac{\Delta[A]}{\sqrt{|A|}}\ge1$$is called the {\em difference characteristic} of $A$. The difference characteristic is submultiplicative in the sense that $\eth[G]\le \eth[H]\cdot\eth[G/H]$ for any normal subgroup $H$ of a finite group $G$, see \cite[1.1]{BGN}.

In this paper we are interested in evaluating the difference characteristics of finite cyclic groups. In fact, this problem has been studied in the literature (see \cite{BH}, \cite{FKL}, \cite{KL}). In particular, Kozma and Lev \cite{KL} proved (using the classification of finite simple groups) that each finite group $G$ has difference characteristic $\eth[G]\le\frac{4}{\sqrt{3}}\approx 2.3094$. In this paper we shall show that for finite cyclic groups this upper bound can be improved to $\eth[G]\le\frac32$. Moreover, if a finite cyclic group $C_n$ has cardinality $n\ge 9$ (resp. $n\ge2\cdot 10^{15}$), then $\eth[C_n]\le\frac{12}{\sqrt{73}}\approx1.4045$ (resp.  $\Delta[C_n]<\frac2{\sqrt{3}}\approx1.1547$). It is an open problem if $\lim_{n\to\infty}\eth[C_n]=1$. However, many subsequences of the sequence $(\eth[C_n])_{n=1}^\infty$ indeed converge to $1$. In particular, using known results on (relative) difference sets, we shall prove that $$\lim_{p\to\infty}\eth[C_{p^2-p}]=\lim_{q\to \infty}\eth[C_{q^2+q+1}]=\lim_{q\to\infty}\eth[C_{q^2-q}]=1$$ where $p$ runs over prime numbers and $q$ runs over prime powers.
A number $q$ is called a  {\em prime power} if $q$ is equal to the power $p^k$ of some prime number $p$.

To derive an upper bound for the difference sizes of arbitrary finite cyclic group, we shall use known information on the difference sizes of order intervals $[-n,n]=\{x\in\IZ:|x|\le n\}$ in the group $\IZ$ of integer numbers. Here we exploit the approach first used by R\'edei and R\'enyi \cite{RR} and then developed by Leech \cite{Leech} and Golay \cite{Golay}.

For a model of a cyclic group of order $n$ we take the multiplicative group $$C_n=\{z\in\IC:z^n=1\}$$ of complex $n$-th roots of $1$. 

\section{Known results}

In this section we recall some known results on difference bases in
finite groups. The following important fact was proved by Kozma and Lev
\cite{KL} (using the classification of finite simple groups).

\begin{theorem}[Kozma, Lev]\label{KL} Each finite group $G$ has
difference characteristic $\eth[G]\le\frac{4}{\sqrt{3}}$.
\end{theorem}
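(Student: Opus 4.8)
The plan is to reduce the statement, by a few general inequalities, to the cases of abelian groups and of nonabelian finite simple groups, the latter being dealt with through the classification. The first tool is the submultiplicativity $\eth[G]\le\eth[H]\cdot\eth[G/H]$ for a normal subgroup $H\trianglelefteq G$ \cite[1.1]{BGN} (if $B_H\subseteq H$ is a difference basis of $H$ and $\tilde B\subseteq G$ lifts a difference basis of $G/H$, then $\tilde B\cdot B_H$ is a difference basis of $G$). The second, for an arbitrary subgroup $H\le G$, is $\Delta[G]\le[G:H]\cdot\Delta[H]$: fixing left coset representatives $t_1,\dots,t_k$ one has $\bigcup_{i,j}t_iHt_j^{-1}=G$ (since $\bigcup_i t_iH=G$ and $\bigcup_j Ht_j^{-1}=G$), so each $g\in G$ satisfies $t_i^{-1}gt_j\in H$ for suitable $i,j$, and then $g=(t_ih_1)(t_jh_2)^{-1}$ where $h_1,h_2\in B_H$ are chosen with $h_1h_2^{-1}=t_i^{-1}gt_j$; hence $\bigcup_i t_iB_H$ is a difference basis and $\eth[G]\le\sqrt{[G:H]}\cdot\eth[H]$. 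To get a competitive constant this must be sharpened: replace the full transversal by a subset $S$ of the coset space $G/H$ that is \emph{spreading} for the left action of $G$ (so $gS\cap S\neq\varnothing$ for every $g\in G$) and as close as possible to the forced bound $|S|\ge\sqrt{[G:H]}$; then $\eth[G]\le\big(|S|/\sqrt{[G:H]}\big)\cdot\eth[H]$, which is essentially $\eth[H]$ when $G/H$ carries a near-perfect difference set.

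For abelian $G$ one treats cyclic groups first. The engine is the theory of planar difference sets: for $n=q^2+q+1$ with $q$ a prime power there is a Singer difference set of size $q+1\sim\sqrt n$, so $\eth[C_n]\to1$ along that sequence, and for general $n$ one interpolates --- e.g.\ by reducing modulo $n$ the near-optimal difference bases of integer intervals $[-N,N]$ of R\'edei--R\'enyi, Leech and Golay (\cite{RR},\cite{Leech},\cite{Golay}) together with the density of prime powers, or by adjoining boundedly many correction points to a Sidon set --- obtaining $\eth[C_n]\le c_1$ with an explicit $c_1$. General abelian groups are reduced to cyclic ones by splitting $G=A_1\times A_2$ with $|A_1|\approx|A_2|$ and recursing with submultiplicativity; keeping the accumulated constant bounded here also calls for direct ``perfect difference family'' constructions in products of cyclic groups. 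The same circle of ideas bounds $\eth$ on nilpotent and solvable groups through their abelian sections.

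The nonabelian part uses the classification of finite simple groups together with the above reductions. The sporadic groups (finitely many) are settled individually, via a suitable large subgroup or a direct search; $\eth[A_n]$ is bounded by an explicit construction on the $n$-point permutation domain; and for the groups of Lie type one exploits the Bruhat decomposition $G=BWB$, assembling a difference basis from a difference basis of a Borel (or parabolic, or torus normalizer) subgroup $H$ and a spreading subset $S\subseteq G/H$, the point being that the coset geometry of Lie-type groups is rigid enough to provide such $S$ of size $O(\sqrt{[G:H]})$ with an implied constant \emph{uniform} in rank and field size. Taking the worst case over all families yields $\eth[G]\le\frac{4}{\sqrt{3}}$.

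I expect the main obstacle to be exactly this last step: one must produce, family by family (linear, unitary, symplectic, orthogonal, exceptional) and uniformly in rank and field size, an explicit subgroup $H$ and an explicit spreading set $S\subseteq G/H$ of near-minimal size, and then check that no resulting constant exceeds $\frac{4}{\sqrt{3}}$ --- the combinatorics of these coset spaces, rather than the group-theoretic reductions, being the technical heart. A secondary difficulty is holding the constant bounded through the abelian, nilpotent and solvable reductions, where the naive composition-series estimate is far too lossy and genuine difference-set constructions are indispensable.
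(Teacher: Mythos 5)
This statement is not proved in the paper at all: Theorem~\ref{KL} is imported verbatim from Kozma and Lev \cite{KL}, with only the remark that its proof uses the classification of finite simple groups. So there is no in-paper argument to compare yours against; the relevant comparison is with the original 1992 proof, whose broad architecture (reduction to simple groups via CFSG, subgroup-plus-transversal constructions, special treatment of the Lie-type families) your outline does echo.

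That said, as a proof your proposal has a genuine gap, and you have located it yourself: everything that actually produces the constant $\frac{4}{\sqrt{3}}$ is stated as a goal rather than established. The two reductions you do justify -- $\eth[G]\le\eth[H]\cdot\eth[G/H]$ for normal $H$, and $\Delta[G]\le[G{:}H]\cdot\Delta[H]$ via $\bigcup_i t_iB_H$ (your verification of the latter is correct) -- lose a factor of $\sqrt{[G{:}H]}$ and so cannot by themselves give any absolute constant. The entire content of the theorem is then concentrated in the unproved claims: that every relevant $G$ admits a subgroup $H$ together with a ``spreading'' set $S\subseteq G/H$ with $\bigcup_{s,s'\in S}sHs'^{-1}=G$ and $|S|=O(\sqrt{[G{:}H]})$ \emph{uniformly} over all families of simple groups of Lie type, over all ranks and field sizes; that the abelian/solvable reductions can be carried out with bounded accumulated loss; and that the worst case over all of these is exactly $\frac{4}{\sqrt{3}}$ rather than some other constant. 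None of these is verified, and the last one cannot even be guessed from the sketch: note that the elementary bound $\Delta[G]\le|H|+[G{:}H]-1$ of Proposition~\ref{p:BGN}(3) already gives $\eth[G]\le 2$ whenever $G$ has a subgroup of order near $\sqrt{|G|}$, which is \emph{better} than $\frac{4}{\sqrt{3}}\approx 2.309$; the whole difficulty, and the source of the specific constant, is the quantitative control of how far the subgroup lattice of an arbitrary finite (in particular simple) group can be from containing a subgroup of the right size. Your outline is a reasonable reconstruction of the strategy of \cite{KL}, but it is a program, not a proof.
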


For cyclic groups the upper bound $\frac{4}{\sqrt{3}}$ can be improved to $\frac32$, which will be done in Theorem~\ref{t:max}.

For a real number
 $x$ we put $$\lceil x\rceil=\min\{n\in\IZ:n\ge x\}\mbox{ and }\lfloor
x\rfloor=\max\{n\in\IZ:n\le x\}.$$

The first three statements of the following proposition were proved in \cite[1.1]{BGN}.

\begin{proposition}\label{p:BGN} Let $G$
be a finite group. Then
\begin{enumerate}
\item $ \frac{1+\sqrt{4|G|-3}}2\le \Delta[G]\le
\big\lceil\frac{|G|+1}2\big\rceil$,
\item $\Delta[G]\le \Delta[H]\cdot \Delta[G/H]$ and
$\eth[G]\le\eth[H]\cdot\eth[G/H]$ for any normal subgroup
$H\subset G$;
\item $\Delta[G]\le |H|+|G/H|-1$ for any subgroup $H\subset G$;
\item $\Delta[A\cup B]\le\Delta[A]+\Delta[B]-1$ for any non-empty sets $A,B\subset G$.
\end{enumerate}
\end{proposition}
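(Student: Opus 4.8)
Statements (1)--(3) are quoted verbatim from \cite[1.1]{BGN}, so the plan below only addresses the subadditivity inequality (4). The crux is a trivial but crucial observation: being a difference basis is invariant under one-sided translations. Precisely, if $D\subseteq G$ is a difference basis for a set $S\subseteq G$ and $g\in G$, then the right translate $Dg=\{dg:d\in D\}$ is again a difference basis for $S$, since $(xg)(yg)^{-1}=xg\,g^{-1}y^{-1}=xy^{-1}$ for all $x,y\in D$. (Using \emph{right} translates is what keeps the argument valid in a non-abelian $G$; for abelian $G$ the side is immaterial.)

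Granting this, here is the argument. Since $G$ is finite, $\Delta[A]$ and $\Delta[B]$ are finite, and I can choose difference bases $D_A$ for $A$ and $D_B$ for $B$ with $|D_A|=\Delta[A]$ and $|D_B|=\Delta[B]$. As $A$ and $B$ are non-empty, $D_A$ and $D_B$ are non-empty too, so I may fix $a\in D_A$ and $b\in D_B$ and replace $D_B$ by its right translate $D_B':=D_B\cdot(b^{-1}a)$, which is still a difference basis for $B$ with $|D_B'|=\Delta[B]$ and which now contains $b\cdot(b^{-1}a)=a$. Thus $a\in D_A\cap D_B'$.

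Finally, put $D:=D_A\cup D_B'$. Every element of $A$ is a difference of two elements of $D_A\subseteq D$ and every element of $B$ is a difference of two elements of $D_B'\subseteq D$, so $D$ is a difference basis for $A\cup B$, and
$$\Delta[A\cup B]\le|D|=|D_A|+|D_B'|-|D_A\cap D_B'|\le\Delta[A]+\Delta[B]-1 .$$
I foresee no real obstacle; the only points requiring attention are the one-sidedness of the translation (for the non-abelian case) and the role of the non-emptiness hypothesis --- it is exactly what supplies the common point $a$, and without it the inequality genuinely fails, e.g.\ for $A\neq\emptyset=B$.
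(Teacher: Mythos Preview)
Your proof of (4) is correct and follows essentially the same approach as the paper: both arguments take optimal difference bases for $A$ and $B$, right-translate so that they share a common element, and then take the union. The only cosmetic difference is that the paper translates \emph{both} bases to contain the identity $1_G$, whereas you translate only $D_B$ to contain a chosen point $a\in D_A$; the underlying idea is identical.
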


\begin{proof} Since (1)--(3) are proved in \cite[1.1]{BGN}, we shall explain (4). Given non-empty sets $A,B\subset G$, find difference bases $D_A$ and $D_B$ for the sets $A,B$ of cardinality $|D_A|=\Delta[A]$ and $|D_B|=\Delta[B]$. Taking any point $d\in D_A$ and replacing $D_A$ by its shift $D_Ad^{-1}$, we can assume that the unit $1_G$ of the group $G$ belongs to $D_A$. By the same reason, we can assume that $1_G\in D_B$. The union $D=D_A\cup D_B$ is a difference basis for $A\cup B$, witnessing that $$\Delta[A\cup B]\le |D|\le |D_A|+|D_B|-1=\Delta[A]+\Delta[B]-1.$$
\end{proof}

Finite groups $G$ with $\Delta[G]=\big\lceil\frac{|G|+1}2\big\rceil$
were characterized in \cite{BGN} as follows.

\begin{theorem}[Banakh, Gavrylkiv, Nykyforchyn] \label{upbound}
For a finite group $G$
\begin{enumerate}
\item[(i)] $\Delta[G]=\big\lceil\frac{|G|+1}2\big\rceil>\frac{|G|}2$
if and only if $G$ is isomorphic to one of the groups:\newline
$C_1$, $C_2$, $C_3$, $C_4$, $C_2\times C_2$, $C_5$, $D_6$,
$(C_2)^3$;
\item[(ii)] $\Delta[G]=\frac{|G|}2$ if and only if $G$ is isomorphic
to one of the groups: $C_6$, $C_8$, $C_2\times C_4$, $D_8$,
$Q_8$.
\end{enumerate}
\end{theorem}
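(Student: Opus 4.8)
The plan is as follows. Since Proposition~\ref{p:BGN}(1) already gives $\Delta[G]\le\lceil\frac{|G|+1}2\rceil$ for every finite group, condition (i) is exactly the assertion that this inequality is an equality, and condition (ii) (which can hold only for $|G|$ even) is the assertion that $\Delta[G]$ is exactly one less than this bound. So I would isolate a finite list of small groups, prove for every other group the strict inequalities $\Delta[G]<\lceil\frac{|G|+1}2\rceil$ and (for $|G|$ even) $\Delta[G]<\frac{|G|}2$, and then compute $\Delta[G]$ exactly for the finitely many exceptions.

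First I would reduce to small order. The Kozma--Lev Theorem~\ref{KL} gives $\Delta[G]\le\frac{4}{\sqrt3}\sqrt{|G|}$ for every finite group, and $\frac4{\sqrt3}\sqrt n<\frac n2$ as soon as $n\ge 22$; hence no group of order $\ge 22$ lies in either family. For the orders $n\le 21$ there are only finitely many isomorphism types, and most are settled by Proposition~\ref{p:BGN}(3): a subgroup $H\le G$ with $2<|H|<\frac{|G|}2$ gives $\Delta[G]\le|H|+|G/H|-1<\frac{|G|}2$ whenever $|G|\ge 13$, and such a subgroup is available for most small $G$ (by Cauchy's theorem $G$ has subgroups of each prime order dividing $|G|$, and $p$-groups have subgroups of every intermediate order). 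A sharper tool for the remaining cases is that a subgroup $H\le G$ together with one coset representative $g$ is a difference basis whenever $[G:H]\le 3$, because then $G=H\cup gH\cup Hg^{-1}$, so $\Delta[G]\le|H|+1$. The few orders still uncovered --- essentially primes $p\le 19$ and numbers $2p$ with $p$ a small prime --- I would handle by writing down explicit small difference bases, such as $\{1,2,4\}\subset C_7$, or the bases coming from Singer/planar difference sets and the R\'edei--R\'enyi constructions recalled in the introduction.

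At the exceptions the upper bounds are straightforward. For the eight groups of order $\le 5$ in family (i), the value $\Delta[G]=\lceil\frac{|G|+1}2\rceil$ is already forced by Proposition~\ref{p:BGN}(1), since for $|G|\le 5$ the lower bound $\frac{1+\sqrt{4|G|-3}}2$ rounds up to $\lceil\frac{|G|+1}2\rceil$. For $D_6\cong S_3$ and $(C_2)^3$, and for the five groups in family (ii), I would exhibit difference bases attaining the stated values explicitly, for instance $\{0,1,3\}\subset C_6$, $\{0,1,2,4\}\subset C_8$, $\{(0,0),(1,0),(2,0),(0,1)\}\subset C_2\times C_4$, $\{1,r,s,r^2s\}\subset D_8=\langle r,s\rangle$, and $\{1,-1,i,j\}\subset Q_8$, checking in each case that all pairwise differences exhaust the group.

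The main obstacle will be the matching lower bounds at the groups where the generic bound $\frac{1+\sqrt{4|G|-3}}2$ is not sharp enough; note that for the five groups in family (ii) this bound already yields $\Delta[G]\ge\frac{|G|}2$, so only $D_6$ and $(C_2)^3$ need extra work. In $(C_2)^3$ one has $a-b=a+b$, so the nonzero differences of a $k$-element set number at most $\binom k2$, and a difference basis therefore requires $1+\binom k2\ge 8$, forcing $|B|\ge 5$. For $D_6$ the decisive point is that in a group with a normal subgroup $N$ of index $2$ a quotient $ab^{-1}$ lies outside $N$ only when exactly one of $a,b$ does; hence a $k$-element basis with $\rho$ members inside $N$ and $\phi$ outside produces at most $\rho\phi$ distinct elements outside $N$. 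Applied to $S_3$ with $N=A_3$ (whose nontrivial coset is the set of three transpositions): a $3$-element basis has $\rho+\phi=3$ with $\rho\ge 1$, so it yields at most $\rho\phi\le 2$ transpositions and cannot be a difference basis, giving $\Delta[D_6]\ge 4$. This index-$2$ counting obstruction also governs the dihedral case in general, so I expect the real effort to lie in making the reduction tight enough that only a genuinely short list of small groups survives, and in pairing each surviving extremal group with the correct counting obstruction of this kind.
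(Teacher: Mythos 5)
The paper offers no proof of this theorem: it is quoted from \cite{BGN} as a known result, so there is no internal argument to compare yours with. Judged on its own terms, your overall plan is viable: Kozma--Lev does give $\Delta[G]\le\frac{4}{\sqrt3}\sqrt{|G|}<\frac{|G|}2$ for $|G|\ge22$, Proposition~\ref{p:BGN}(2)--(3) and explicit bases do dispose of the remaining orders (with some unavoidable hand labour, e.g.\ the five groups of order $12$, where the subgroup bounds only give $6$ and you need explicit $5$-element bases), the exhibited bases for $C_6$, $C_8$, $D_8$, $Q_8$ check out, and the $\binom{k}{2}$ argument for $(C_2)^3$ is correct. However, two of your concrete steps are wrong as stated. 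The claim that $[G:H]\le3$ forces $G=H\cup gH\cup Hg^{-1}$, hence $\Delta[G]\le|H|+1$, fails for non-normal index-$3$ subgroups: in $S_3$ with $H=\langle(12)\rangle$ and $g=(123)$ the cosets $gH$ and $Hg^{-1}$ both contain $(13)$, so $H\cup\{g\}$ misses an element (consistently with $\Delta[D_6]=4>3$). The statement is safe only for index $2$, or index $3$ with $H$ normal.

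The more serious error is in your lower bound for $D_6$. From ``$ab^{-1}\notin N$ only when exactly one of $a,b$ lies in $N$'' you get $2\rho\phi$ ordered pairs, hence at most $2\rho\phi$ distinct elements of $G\setminus N$, not $\rho\phi$; your general claim is false (take $B=\{0,1,3\}\subset C_6$ with $N=\{0,2,4\}$: here $\rho\phi=2$ but all three odd residues occur as differences). Since $2\rho\phi\le4\ge3$ for $\rho+\phi=3$, the corrected count does not rule out a $3$-element basis of $S_3$. What rescues the argument there is that every element of $S_3\setminus A_3$ is an involution, so $ab^{-1}=(ab^{-1})^{-1}=ba^{-1}$ and each unordered pair contributes only one transposition, restoring the bound $\rho\phi\le2<3$. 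You must make this involution step explicit, and you cannot assert that the $\rho\phi$ bound ``governs the dihedral case in general'' without it. With these two repairs the finite verification goes through and the theorem follows.
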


In this theorem by $D_{2n}$ we denote the dihedral group of cardinality
$2n$ and by $Q_8$ the 8-element group of quaternion units.
In \cite{BGN} the difference sizes $\Delta[G]$ was calculated for all
groups $G$ of cardinality $|G|\le 13$.

{
\begin{table}[ht]
\caption{Difference sizes of groups of order $\le13$}\label{tab:BGN}
\begin{tabular}{|c|c|c|c|cc|cc|ccccc|}
\hline
$G$& $C_2$& $C_3$ & $C_5$&$C_4$ &$C_2{\times}C_2$ &$C_6$& $D_6$ & $C_8$
&$C_2{\times}C_4$  & $D_8$ & $Q_8$& $(C_2)^3$\\
\hline
$\Delta[G]$&2&2&3&3&3&3&4&4&4&4&4&5\\
\hline
\hline
$G$&$C_{7}$& $C_{11}$& $C_{13}$ &$C_9$&$C_3{\times}C_3$ &$C_{10}$&
$D_{10}$ & $C_{12}$ &$C_2{\times}C_6$ &$D_{12}$& $A_4$ & $C_3{\rtimes}
C_4$\\
\hline
$\Delta[G]$&3&4&4&4&4&4&4&4&5&5&5&5\\
\hline
\end{tabular}
\end{table}
}

Observing that for each cyclic group $C_n$ of cardinality $n\le13$ the
difference size $\Delta[C_n]$ coincides with the lower bound
$\big\lceil\frac{1+\sqrt{4n-3}}2\big\rceil$ given in
Proposition~\ref{p:BGN}(1), the authors of \cite{BGN} posed the following
problem.

\begin{problem}[Banakh, Gavrylkiv, Nykyforchyn]\label{prob1}
Is $\Delta[C_n]=\big\lceil\frac{1+\sqrt{4n-3}}2\big\rceil$ for any
finite cyclic groups $C_n$?
\end{problem}

Using the results of computer calculations we shall give a negative
answer to  Problem~\ref{prob1}. On the other hand, we shall observe that
the classical difference sets of Singer \cite{Singer} witness that
$\Delta[C_n]=\frac{1+\sqrt{4n-3}}2$ for any number $n=1+q+q^2$ where $q$
is a prime power.

\section{Difference sizes of some special cyclic groups}

In this section we collect some upper bounds on the difference size $\Delta[C_n]$ of a cyclic group  whose order $n$ has some special arithmetic properties, for example, is equal to $q^2+q+1$ or $q^2-1$ for a prime power $q$ or to $p^2-p$ for a prime number $p$.
To derive such upper bounds, we shall use known information on (relative) difference sets.

A subset $D$ of a group $G$ is called a {\em difference set} if each non-idempotent element $g\in G$ can be uniquely written as $g=xy^{-1}$ for some elements $x,y\in D$. This definition implies $|D|^2-|D|=|G|-1$ and hence $|D|=\frac{1+\sqrt{4|G|-3}}2$. The following fundamental result was proved by Singer \cite{Singer} in 1938.

\begin{theorem}[Singer]\label{t:Singer} For any prime power $q$ the
cyclic group $C_{q^2+q+1}$ contains a difference set of cardinality $q+1$ and
hence has difference size $\Delta[C_{q^2+q+1}]=q+1$.
\end{theorem}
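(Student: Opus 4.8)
The plan is to realize the group $C_n$ with $n=q^2+q+1$ inside the field $\mathbb{F}_{q^3}$ and to extract the required difference set from the kernel of the field trace. Since the multiplicative group $\mathbb{F}_{q^3}^*$ is cyclic of order $q^3-1=(q-1)(q^2+q+1)$, its unique subgroup of order $q-1$ is $\mathbb{F}_q^*$, and the quotient $\Gamma:=\mathbb{F}_{q^3}^*/\mathbb{F}_q^*$ is cyclic of order $n$, hence isomorphic to $C_n$. Because difference sets (and difference bases) are preserved by group isomorphisms, it suffices to produce a difference set of cardinality $q+1$ in $\Gamma$. Let $\mathrm{Tr}\colon\mathbb{F}_{q^3}\to\mathbb{F}_q$, $\mathrm{Tr}(x)=x+x^q+x^{q^2}$, be the field trace; it is a nonzero $\mathbb{F}_q$-linear functional, so its kernel $H=\{x\in\mathbb{F}_{q^3}:\mathrm{Tr}(x)=0\}$ is a $2$-dimensional $\mathbb{F}_q$-subspace. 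Since $\mathrm{Tr}(\beta x)=\beta\,\mathrm{Tr}(x)$ for $\beta\in\mathbb{F}_q$, the membership $x\in H$ depends only on the coset $x\mathbb{F}_q^*$, so $D:=\{x\mathbb{F}_q^*:x\in H\setminus\{0\}\}$ is a well-defined subset of $\Gamma$. The $q^2-1$ nonzero elements of $H$ split into $(q^2-1)/(q-1)=q+1$ cosets of $\mathbb{F}_q^*$, whence $|D|=q+1$.

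Next I would verify that every non-identity element $c=\lambda\mathbb{F}_q^*$ of $\Gamma$ (with $\lambda\in\mathbb{F}_{q^3}\setminus\mathbb{F}_q$) is represented as $c=(a\mathbb{F}_q^*)(b\mathbb{F}_q^*)^{-1}$ by exactly one pair from $D\times D$. Fixing a representative $b$, the equation forces $a\mathbb{F}_q^*=\lambda b\,\mathbb{F}_q^*$, so the pair is determined by the coset $b\mathbb{F}_q^*$ alone, and it lies in $D\times D$ precisely when $\mathrm{Tr}(b)=0$ and $\mathrm{Tr}(\lambda b)=0$, i.e.\ when $b\in H\cap\lambda^{-1}H$. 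The functionals $x\mapsto\mathrm{Tr}(x)$ and $x\mapsto\mathrm{Tr}(\lambda x)$ are $\mathbb{F}_q$-linearly independent: via the non-degenerate trace form $(x,y)\mapsto\mathrm{Tr}(xy)$ they correspond to $1$ and $\lambda$, which are independent over $\mathbb{F}_q$ exactly because $\lambda\notin\mathbb{F}_q$. Hence $H\cap\lambda^{-1}H$ is $1$-dimensional over $\mathbb{F}_q$, its $q-1$ nonzero elements form a single coset of $\mathbb{F}_q^*$, and $c$ has exactly one representation. Thus $D$ is a difference set of cardinality $q+1$ in $\Gamma$; geometrically this is the classical Singer construction, with $\Gamma$ the point set of the projective plane $\mathrm{PG}(2,q)$, $D$ a line, and the uniqueness statement amounting to ``two distinct lines meet in one point''.

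Finally, a difference set is automatically a difference basis (for $g\neq1$ use the representation just found; for $g=1$ use $g=aa^{-1}$ with any $a\in D$, $D$ being non-empty), so $\Delta[C_n]\le|D|=q+1$. On the other hand, Proposition~\ref{p:BGN}(1) gives $\Delta[C_n]\ge\frac{1+\sqrt{4n-3}}2=\frac{1+\sqrt{4q^2+4q+1}}2=\frac{1+(2q+1)}2=q+1$, so $\Delta[C_{q^2+q+1}]=q+1$. The only step that requires a genuine argument rather than counting is the linear independence of $\mathrm{Tr}(\cdot)$ and $\mathrm{Tr}(\lambda\cdot)$ for $\lambda\notin\mathbb{F}_q$, equivalently the non-degeneracy of the trace form of the separable extension $\mathbb{F}_{q^3}/\mathbb{F}_q$; I expect this to be the only place where care is needed, everything else being orbit- and dimension-counting.
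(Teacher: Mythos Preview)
Your proof is correct and is precisely the classical Singer construction, phrased via the trace kernel in $\mathbb{F}_{q^3}$. The paper, however, does not prove Theorem~\ref{t:Singer} at all: it simply states the result with attribution to Singer~\cite{Singer} and uses it as a black box throughout. So there is no approach in the paper to compare yours with---you have supplied a proof where the paper provides none.

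For what it is worth, your argument is clean and self-contained. The identification $\Gamma=\mathbb{F}_{q^3}^*/\mathbb{F}_q^*\cong C_{q^2+q+1}$ is the projective-plane point set $\mathrm{PG}(2,q)$ under the Singer cycle, and $D$ is a line; your uniqueness count is exactly ``two distinct lines meet in a single point''. The one nontrivial ingredient you correctly isolate---that $x\mapsto\mathrm{Tr}(x)$ and $x\mapsto\mathrm{Tr}(\lambda x)$ are $\mathbb{F}_q$-linearly independent for $\lambda\notin\mathbb{F}_q$---follows immediately from non-degeneracy of the trace form on the separable extension $\mathbb{F}_{q^3}/\mathbb{F}_q$, and nothing more is needed. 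The final appeal to Proposition~\ref{p:BGN}(1) for the lower bound is exactly how the paper itself justifies the equality $\Delta[C_{q^2+q+1}]=q+1$ in the paragraph following the theorem.
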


Singer's Theorem implies that for any prime power $q$ the cyclic group $C_n$ of cardinality $n=q^2+q+1$ has difference size
$$\Delta[C_n]=q+1=\frac{1+\sqrt{4n-3}}2.$$
So, for infinitely many numbers $n$ the lower bound $\frac{1+\sqrt{4n-3}}2$ for $\Delta[C_n]$ (given in Proposition~\ref{p:BGN}(1)) is attained.

The converse result to Singer's Theorem is known in Algebraic
Combinatorics as PPC (abbreviated from the Prime Power Conjecture).

\begin{conjecture}[$\PPC(n)$] If for a natural number $q<n$ some Abelian group
$G$ of order $|G|=q^2+q+1$ contains a difference set $D\subset G$, then
$q$ is a prime power.
\end{conjecture}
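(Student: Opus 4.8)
The statement $\PPC(n)$ is, as $n$ ranges over $\IN$, the classical \emph{Prime Power Conjecture} for planar difference sets --- equivalently, the conjecture that a finite projective plane admitting an Abelian collineation group acting regularly on its points has prime power order. It is a long-standing open problem, so what follows is a plan of attack, together with an account of the partial results it rests on, rather than a complete proof.

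The plan is to fix an Abelian group $G$ of order $v=q^2+q+1$ containing a difference set $D\subset G$ of cardinality $q+1$ and to translate the combinatorial hypothesis into an identity in the group ring $\IZ[G]$: writing $D=\sum_{d\in D}d$ and $D^{(-1)}=\sum_{d\in D}d^{-1}$, the defining property of a difference set becomes $DD^{(-1)}=q\cdot 1_G+J$, where $J=\sum_{g\in G}g$. The first classical tool is Hall's Multiplier Theorem: since $q=k-\lambda$ with $k=q+1$, $\lambda=1$, and $\gcd(q,v)=1$, every prime divisor $t$ of $q$ is a numerical multiplier of $D$, that is, the automorphism $g\mapsto g^t$ of $G$ maps some translate of $D$ onto itself. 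The first step is to exploit this: choosing a translate $D'$ fixed by all these multipliers and analysing the orbits of the maps $x\mapsto x^t$ on $G$ yields rigid constraints on the intersection numbers $|D'\cap O|$ for orbits $O$, which already exclude many composite $q$ and, when $q$ has two distinct prime factors, frequently force two incompatible multiplier structures on the same translate.

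The second step is the deeper arithmetic input, due in essence to Mann: applying the characters $\chi$ of $G$ to the group-ring identity gives $\chi(D)\overline{\chi(D)}=q$ for every nontrivial $\chi$, so $\chi(D)$ is an algebraic integer of absolute value $\sqrt q$ (under every embedding) lying in the cyclotomic field $\mathbb{Q}(\zeta_v)$. Studying the factorisation of the ideal $(q)$ in $\IZ[\zeta_v]$, the ramification of the primes dividing $q$, and self-conjugacy conditions on those primes modulo the exponent of $G$, one forces $q$ into prime power shape unless certain ideals fail to be principal in a tightly controlled way; Bruck--Ryser--Chowla-type congruences and $p$-rank bounds for the incidence matrix of the associated plane supply the remaining exclusions.

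The hard part is that none of these ingredients is conclusive on its own: the Multiplier Theorem only constrains $D$, it does not force $q$ to be a prime power, and the character-theoretic and number-theoretic arguments, powerful as they are, leave infinitely many residue classes of composite $q$ untouched. A genuine proof of $\PPC(n)$ for all $n$ would require a uniform obstruction --- presumably a new invariant extracted from the identity $DD^{(-1)}=q\cdot 1_G+J$ --- excluding every composite, non-prime-power $q$ simultaneously. In the absence of such an idea only individual values and finite computational ranges (currently $q$ up to roughly $2\cdot 10^{9}$) are settled, which is precisely why $\PPC(n)$ is recorded here as a conjecture rather than proved.
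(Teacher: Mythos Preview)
You have correctly recognised that the statement is recorded in the paper as a \emph{conjecture}, not a theorem: the paper offers no proof of $\PPC(n)$ at all, merely citing Gordon's computational verification for $n<2{,}000{,}000$ and then using $\PPC(n)$ as a conditional hypothesis in the subsequent proposition. So there is nothing for your proposal to be compared against; your decision to present a survey of the standard machinery (Hall multipliers, the group-ring identity $DD^{(-1)}=q\cdot 1_G+J$, character sums in $\mathbb{Q}(\zeta_v)$, Mann's self-conjugacy arguments) together with an honest statement that no uniform obstruction is known is exactly the right response.

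One small point of accuracy: your closing remark that the verified range is ``$q$ up to roughly $2\cdot 10^{9}$'' overstates the published record by a wide margin. The paper's reference is to Gordon's 1994 bound $n<2\cdot 10^{6}$ (so $q$ only up to about $1400$); subsequent work has pushed this further, but not to anything like $10^{9}$. If you keep that sentence, you should either give a citation supporting the figure or scale it back.
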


In \cite{G} $\PPC(n)$ is confirmed for all numbers $n<2\,000\,000$.
The Prime Power Conjecture implies the following converse to
Theorem~\ref{t:Singer}.

\begin{proposition} Let $G$ be an Abelian group of difference size
$\Delta[G]=\frac{1+\sqrt{4|G|-3}}2$. If $\PPC(|G|)$ holds, then
$q=\Delta[G]-1$ is a prime power and $|G|=q^2+q+1$.
\end{proposition}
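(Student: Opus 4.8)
The plan is to chase the definitions. Suppose $G$ is an Abelian group with difference size $\Delta[G]=\frac{1+\sqrt{4|G|-3}}2$, and let $B\subset G$ be a difference basis of this minimal cardinality, so $|B|=\frac{1+\sqrt{4|G|-3}}2$. First I would observe that this forces the number $q:=|B|-1=\frac{\sqrt{4|G|-3}-1}2$ to satisfy $q^2+q+1=|G|$: solving $|B|=\frac{1+\sqrt{4|G|-3}}2$ for $|G|$ gives $|G|=|B|^2-|B|+1=(q+1)^2-(q+1)+1=q^2+q+1$. Thus $|B|^2-|B|=|G|-1$, meaning the number of ordered pairs $(a,b)\in B\times B$ with $a\neq b$ is exactly $|G|-1$, the number of non-identity elements of $G$.

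Next I would argue that $B$ is in fact a difference \emph{set}, not merely a difference basis. Since $B$ is a difference basis, every non-identity $g\in G$ is representable as $g=a-b$ with $a,b\in B$ (necessarily $a\ne b$), so the map from ordered pairs $(a,b)\in B\times B$ with $a\ne b$ to $G\setminus\{0\}$ sending $(a,b)\mapsto a-b$ is surjective. But the domain and codomain both have cardinality $|G|-1$, so this map is a bijection; in particular each non-identity element has a \emph{unique} such representation. That is precisely the definition of a difference set given in the excerpt. Hence $B\subset G$ is a difference set in the Abelian group $G$ of order $|G|=q^2+q+1$ with $q=|B|-1<|B|\le |G|$ (the strict inequality $q<|G|$ being clear since $|G|\ge 1$ and $q=|B|-1$).

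Finally I would invoke the Prime Power Conjecture. By hypothesis $\PPC(|G|)$ holds, and we have exhibited an Abelian group $G$ of order $q^2+q+1$ containing a difference set, with $q<|G|$ a natural number. The statement of $\PPC(|G|)$ then yields that $q$ is a prime power. Combining this with the identity $q=\Delta[G]-1$ (which holds because $|B|=\Delta[G]$) and $|G|=q^2+q+1$ established above completes the proof.

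I do not expect any genuine obstacle here: the argument is a short unwinding of the definitions of \emph{difference basis}, \emph{difference set}, and \emph{difference size}, together with a one-line counting observation (surjection between equal finite sets is a bijection) and a direct appeal to $\PPC$. The only point requiring a moment's care is verifying the arithmetic that $\Delta[G]=\frac{1+\sqrt{4|G|-3}}2$ is equivalent to $|G|=q^2+q+1$ for $q=\Delta[G]-1$, and checking the side condition $q<|G|$ needed to apply the conjecture, both of which are immediate.
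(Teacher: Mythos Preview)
Your argument is correct and follows essentially the same route as the paper: compute $|G|=q^2+q+1$ from the hypothesis, observe that a minimal difference basis $B$ has $|B|^2-|B|=|G|-1$, use a counting/pigeonhole argument to upgrade $B$ to a difference set, and then invoke $\PPC(|G|)$. The only cosmetic differences are your use of additive notation and your explicit verification of the side condition $q<|G|$, which the paper leaves implicit.
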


\begin{proof} Let $q:=\Delta[G]-1$. The equality
$1+q=\frac{1+\sqrt{4|G|-3}}2$ implies that
$|G|=\frac14\big((2q+1)^2+3\big)=q^2+q+1$. Let $D\subset G$ be a
difference basis of cardinality $|D|=\Delta[G]=1+q$ in $G$. Consider the
surjective map $\xi:D\times D\to G$, $\xi:(x,y)\mapsto xy^{-1}$. The
equality $|D|^2-|D|=q^2+q=|G\setminus\{1_G\}|$ implies that for each $g\in
G\setminus\{1_G\}$ the preimage $\xi^{-1}(g)$ is a singleton, which means
that $D$ is a difference set in $G$. Now $\PPC(|G|)$ implies that $q$ is a prime power.
\end{proof}

Singer derived his theorem studying properties of projective planes over finite fields. A corresponding result for affine planes was obtained by Bose \cite{Bose} and Chowla \cite{Chowla}.

\begin{theorem}[Bose-Chowla]\label{t:Bose} For any prime power $q$ the set $C_{q^2-1}\setminus C_{q-1}$  has a difference basis of cardinality $q$ in the cyclic group $C_{q^2-1}$. Consequently, $\Delta[C_{q^2-1}\setminus C_{q-1}]=q$ and
$$\Delta[C_{q^2-1}]\le\Delta[C_{q^2-1}\setminus C_{q-1}]+\Delta[C_{q-1}]-1=q-1+\Delta[C_{q-1}].$$
\end{theorem}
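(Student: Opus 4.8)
The theorem to prove is the Bose--Chowla theorem, which I will treat as a claim about a concrete construction inside $C_{q^2-1}$.

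The plan is to use the multiplicative structure of the finite field $\mathbb F_{q^2}$. First I would fix a prime power $q$, let $\mathbb F_{q^2}$ be the field with $q^2$ elements, and let $\theta$ be a generator of the cyclic multiplicative group $\mathbb F_{q^2}^\times$, which has order $q^2-1$. Since $C_{q^2-1}$ is (isomorphically) this multiplicative group, I may identify $C_{q^2-1}$ with $\mathbb F_{q^2}^\times$ and the subgroup $C_{q-1}$ with $\mathbb F_q^\times$, the nonzero elements of the prime subfield copy $\mathbb F_q\subset\mathbb F_{q^2}$ (the unique subgroup of order $q-1$). The key object is the ``Bose set'': pick a generator $\alpha$ of $\mathbb F_{q^2}$ over $\mathbb F_q$ (so $\{1,\alpha\}$ is an $\mathbb F_q$-basis, equivalently $\alpha\notin\mathbb F_q$), and consider $B:=\{\alpha+t : t\in\mathbb F_q\}\subset\mathbb F_{q^2}^\times$. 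Note $|B|=q$ and every element of $B$ is nonzero since $\alpha\notin\mathbb F_q$, so $B$ is a legitimate subset of $C_{q^2-1}$.

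The core step is to show $B$ is a difference basis for $C_{q^2-1}\setminus C_{q-1}$, i.e.\ that every element of $\mathbb F_{q^2}^\times$ lying outside $\mathbb F_q^\times$ is a ratio $(\alpha+s)/(\alpha+t)$ for some $s,t\in\mathbb F_q$. Here I would count: the map $\mathbb F_q\times\mathbb F_q\to\mathbb F_{q^2}^\times$, $(s,t)\mapsto(\alpha+s)/(\alpha+t)$ has image landing in $\mathbb F_{q^2}^\times$, and I claim two pairs $(s,t)\neq(s',t')$ give the same ratio only when $s=t$ and $s'=t'$, in which case the ratio is $1$. Indeed $(\alpha+s)/(\alpha+t)=(\alpha+s')/(\alpha+t')$ rearranges to $\alpha(t'-t+s-s')=st'-s't+\text{lower}$, and since $1,\alpha$ are $\mathbb F_q$-linearly independent this forces $t'-t = s'-s$; substituting back and using linear independence again forces $s=s'$, $t=t'$ unless the common value is $1$. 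Hence the $q^2-q$ ordered pairs with $s\neq t$ map injectively onto a subset of size $q^2-q$ of $\mathbb F_{q^2}^\times\setminus\{1\}$; I must then check this image is exactly $\mathbb F_{q^2}^\times\setminus\mathbb F_q^\times$. One inclusion: if $(\alpha+s)/(\alpha+t)\in\mathbb F_q^\times$, say it equals $c\in\mathbb F_q$, then $\alpha+s=c\alpha+ct$, so $(1-c)\alpha = ct-s$; linear independence of $1,\alpha$ gives $c=1$ and then $s=t$. So the $q^2-q$ ratios with $s\neq t$ avoid $\mathbb F_q^\times$ entirely, and being $q^2-q=|\mathbb F_{q^2}^\times\setminus\mathbb F_q^\times|$ distinct elements, they fill it. This proves $\Delta[C_{q^2-1}\setminus C_{q-1}]\le q$; the matching lower bound $\Delta[A]\ge\sqrt{|A|}=\sqrt{q^2-q}>q-1$ gives equality $\Delta[C_{q^2-1}\setminus C_{q-1}]=q$ once we observe $q-1<\sqrt{q^2-q}\le q$, so the integer difference size is exactly $q$.

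Finally, the displayed inequality follows by subadditivity: writing $C_{q^2-1}=(C_{q^2-1}\setminus C_{q-1})\cup C_{q-1}$ and applying Proposition~\ref{p:BGN}(4) gives $\Delta[C_{q^2-1}]\le \Delta[C_{q^2-1}\setminus C_{q-1}]+\Delta[C_{q-1}]-1=q-1+\Delta[C_{q-1}]$. The main obstacle is the injectivity/surjectivity bookkeeping in the middle step: one must be careful that $B$ is a difference basis for the punctured set $A=C_{q^2-1}\setminus C_{q-1}$ in the sense that requires $a=xy^{-1}$ only for $a\in A$ (the identity $1$ need not be hit, and indeed we do not hit elements of $C_{q-1}$), and the counting argument must correctly separate the diagonal pairs $s=t$ (which all yield the ratio $1$) from the off-diagonal pairs. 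The underlying idea is exactly Bose's original observation that $\{\alpha+t:t\in\mathbb F_q\}$ forms a ``perfect difference family'' relative to the subfield, i.e.\ a relative difference set with parameters $(q+1,q-1,q,1)$ in disguise; translating that classical fact into the language of difference bases is the substance of the proof.
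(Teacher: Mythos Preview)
The paper does not actually give its own proof of Theorem~\ref{t:Bose}; the result is stated as a classical theorem of Bose and Chowla with citations, and only the ``Consequently'' clause is implicitly justified by the subadditivity in Proposition~\ref{p:BGN}(4). Your proposal supplies precisely the classical Bose construction (the Sidon set $\{\alpha+t:t\in\mathbb F_q\}$ in $\mathbb F_{q^2}^\times$), and the overall argument is correct.

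One small point of presentation: the injectivity step is written loosely (the ``$+\text{lower}$'' is unclear and there is a sign slip). Done cleanly, cross-multiplying $(\alpha+s)(\alpha+t')=(\alpha+s')(\alpha+t)$ and cancelling $\alpha^2$ gives $(s+t'-s'-t)\alpha = s't-st'$; linear independence of $1,\alpha$ over $\mathbb F_q$ forces $s-s'=t-t'=:d$ and $s't=st'$, and substituting $s'=s-d$, $t'=t-d$ into the second equation yields $d(s-t)=0$, so either $(s,t)=(s',t')$ or $s=t$, exactly as you need. The surjectivity-by-counting onto $\mathbb F_{q^2}^\times\setminus\mathbb F_q^\times$, the lower bound $\Delta[A]\ge\sqrt{q^2-q}>q-1$, and the final appeal to Proposition~\ref{p:BGN}(4) are all fine.
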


We shall also need the following theorem essentially proved by Rusza \cite{Rusza}.

\begin{theorem}[Rusza]\label{t:Rusza} For any prime number $p$ the set $C_{p^2-p}\setminus (C_p\cup C_{p-1})$ has a difference basis of cardinality $p-1$ in the group $C_{p^2-p}$.  Consequently, $\Delta[C_{p^2-p}\setminus (C_{p}\cup C_{p-1})]=p-1$ and
$$\Delta[C_{p^2-p}]\le p-3+\Delta[C_p]+\Delta[C_{p-1}].$$
\end{theorem}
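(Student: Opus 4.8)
The plan is to deduce the Rusza-type statement from a known result of Ruzsa on additive bases for arithmetic purposes, and then to bootstrap the quantitative bound on $\Delta[C_{p^2-p}]$ via the subadditivity inequality of Proposition~\ref{p:BGN}(4). Ruzsa's construction (in \cite{Rusza}) produces, for each prime $p$, a set of $p-1$ residues modulo $p^2-p$ whose difference set covers exactly the residues coprime to $p$ together with (suitably interpreted) the residues not lying in the ``bad'' cosets; the key point is that the multiplicative structure of $\mathbb{Z}/p\mathbb{Z}^\times$ combined with the additive structure of $\mathbb{Z}/(p-1)\mathbb{Z}$ (via the Chinese Remainder Theorem, since $\gcd(p,p-1)=1$) yields a Sidon-type set whose differences are exactly $C_{p^2-p}\setminus(C_p\cup C_{p-1})$. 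So the first step is to recall/restate Ruzsa's construction in the language of this paper: identify $C_{p^2-p}$ with $\mathbb{Z}/p(p-1)\mathbb{Z}\cong \mathbb{Z}/p\mathbb{Z}\times\mathbb{Z}/(p-1)\mathbb{Z}$, and write down the explicit set $B=\{(g^k \bmod p,\ k \bmod (p-1)) : k=0,\dots,p-2\}$ for a primitive root $g$ modulo $p$; then verify that $B-B$ hits every element $(a,b)$ with $a\not\equiv 0$, and that $|B|=p-1$.

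The verification that $B-B \supseteq C_{p^2-p}\setminus(C_p\cup C_{p-1})$ is the technical heart. Given a target $(a,b)$ with $a\in(\mathbb{Z}/p\mathbb{Z})^\times$ and $b\in\mathbb{Z}/(p-1)\mathbb{Z}$, one needs indices $k,\ell\in\{0,\dots,p-2\}$ with $g^k-g^\ell\equiv a\pmod p$ and $k-\ell\equiv b\pmod{p-1}$. Writing $\ell=k-b$ (indices mod $p-1$) reduces this to solving $g^k(1-g^{-b})\equiv a\pmod p$ for $k$, which has a unique solution as long as $1-g^{-b}\not\equiv 0\pmod p$, i.e.\ as long as $b\not\equiv 0\pmod{p-1}$; and the residues $(a,0)$ with $a\neq 0$ are exactly $C_{p-1}$-avoiding... here one must be careful: $(a,0)$ corresponds to elements of $C_p\setminus\{1\}$-type cosets, so they are legitimately excluded. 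Conversely $B-B$ contains no element with first coordinate $0$ except $0$ itself, since $g^k\equiv g^\ell\pmod p$ forces $k=\ell$. This gives $\Delta[C_{p^2-p}\setminus(C_p\cup C_{p-1})]\le p-1$, and the reverse inequality $\ge p-1$ follows from the counting bound $|C_{p^2-p}\setminus(C_p\cup C_{p-1})| = p(p-1)-p-(p-1)+1 = (p-1)^2$, since a difference basis $B$ for a set of size $(p-1)^2$ must have $|B|\ge p-1$ (actually one needs $|B|^2\ge |B-B|\ge (p-1)^2$, giving $|B|\ge p-1$); so equality holds.

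For the final inequality $\Delta[C_{p^2-p}]\le p-3+\Delta[C_p]+\Delta[C_{p-1}]$, I would apply Proposition~\ref{p:BGN}(4) twice. Observe that $C_{p^2-p}$ is the union of the three sets $C_{p^2-p}\setminus(C_p\cup C_{p-1})$, $C_p$, and $C_{p-1}$ (as subsets of the group $C_{p^2-p}$, where $C_p$ and $C_{p-1}$ denote the unique subgroups of those orders). Then
\[
\Delta[C_{p^2-p}] \le \Delta\big[(C_{p^2-p}\setminus(C_p\cup C_{p-1}))\cup C_p\big] + \Delta[C_{p-1}] - 1
\le (p-1) + \Delta[C_p] - 1 + \Delta[C_{p-1}] - 1,
\]
which simplifies to $p-3+\Delta[C_p]+\Delta[C_{p-1}]$. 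The main obstacle is the careful bookkeeping in the verification step: one must correctly match the cosets excluded by Ruzsa's construction with $C_p\cup C_{p-1}$ under the CRT identification, and handle the degenerate case $b\equiv 0$ (where $1-g^{-b}=0$) by noting precisely that it corresponds to the excluded subgroup cosets rather than to a genuine failure of the basis property. Everything else is routine once the identification is set up correctly and Ruzsa's theorem \cite{Rusza} is invoked in the appropriate form.
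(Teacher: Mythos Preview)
Your approach is essentially the same as the paper's. Both identify $C_{p^2-p}$ with the product of the additive and multiplicative groups of $\mathbb{Z}/p\mathbb{Z}$ via the Chinese Remainder Theorem; the paper writes this as $\mathbb{Z}_p\times\mathbb{Z}_p^*$ and takes the diagonal $B=\{(x,x):x\in\mathbb{Z}_p^*\}$, while you take discrete-logarithm coordinates $\mathbb{Z}/p\mathbb{Z}\times\mathbb{Z}/(p-1)\mathbb{Z}$ and $B=\{(g^k,k)\}$---these are the same set under the isomorphism $g^k\mapsto k$. The verification and the twofold application of Proposition~\ref{p:BGN}(4) are likewise identical in substance.

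One arithmetic slip to fix: the cardinality $|C_{p^2-p}\setminus(C_p\cup C_{p-1})|$ equals $p(p-1)-p-(p-1)+1=(p-1)(p-2)$, not $(p-1)^2$. Your lower bound still goes through, since $|B|^2\ge(p-1)(p-2)>(p-2)^2$ forces $|B|\ge p-1$; but the paper's sharper version uses that the identity is excluded from $A$, so $|B|(|B|-1)\ge(p-1)(p-2)$ directly yields $|B|\ge p-1$. Also, tighten the bookkeeping on which coset corresponds to which subgroup: under your coordinates, $(a,0)$ with $a\ne 0$ lies in $C_p\setminus\{1\}$ and $(0,b)$ with $b\ne 0$ lies in $C_{p-1}\setminus\{1\}$, so the excluded case $b\equiv 0$ corresponds exactly to $C_p$, as required.
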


\begin{proof} Given a prime number $p$, consider the field $\IZ_p=\IZ/p\IZ$ of residues modulo $p$. It is known that the multiplicative group $\IZ_p^*=\IZ_p\setminus\{0\}$ of this field is isomorphic to the cyclic group $C_{p-1}$. Since the numbers $p$ and $p-1$ are relatively prime, the product $\IZ_p\times \IZ_p^*$ is isomorphic to the cyclic group $C_{p^2-p}$.
So, instead of the group $C_{p^2-p}$, we can consider the group $\IZ_p\times\IZ_p^*$ (which is the direct product of the additive and multiplicative groups of the field $\IZ_p$).

We claim that the set $B=\{(x,x):x\in\IZ_p^*\}$ is a difference basis for the set $A=\{(x,y)\in\IZ_p\times\IZ_p^*:x\ne 0\mbox{ and }y\ne 1\}$ in the group $\IZ_p\times\IZ_p^*$. Given any pair $(a,b)\in A$ we need to find two elements $x,y\in\IZ_p^*$ such that $(x-y,xy^{-1})=(a,b)$. Since $b\ne 1$, the element $b-1$ is invertible in the field $\IZ_p^*$, so we can consider the elements $x=b(b-1)^{-1}a\ne 0$ and $y=(b-1)^{-1}a\ne 0$. The pairs $(x,x)$ and $(y,y)$ belong to the set $B$ and their difference $(x-y,xy^{-1})$ in the group $\IZ_p\times\IZ_p^*$ is equal to $(a,b)$, witnessing that $B$ is a difference basis for the set $A$.

Observe that the complement of the set $A$ in the group $\IZ_p\times\IZ_p^*$ is equal to the union of two subgroups $\{0\}\times\IZ_p^*$ and $\IZ_p\times\{1\}$.
Therefore, $$\Delta[C_{p^2-p}\setminus(C_p\cup C_{p-1})]=\Delta[A]\le|B|=p-1.$$
The upper bound $\Delta[A]\le p-1$ combined with the equality $$
\begin{aligned}
|A|&=|\IZ_p\times\IZ_p^*|-|\{0\}\times\IZ_p^*|-|\IZ_p\times\{1\}|+|\{(0,1)\}|=\\
&=(p^2-p)-(p-1)-p+1=p^2-3p+2=(p-1)(p-2)=|B|(|B|-1)
\end{aligned}
$$ implies that $\Delta[C_{p^2-p}\setminus(C_p\cup C_{p-1})]=\Delta[A]=p-1$.

By Proposition~\ref{p:BGN}(4),
$$
\begin{aligned}
\Delta[C_{p^2-p}]&=\Delta[\IZ_p\times\IZ_p^*]=\Delta[A\cup(\IZ_p\times\{1\})\cup(\{0\}\times\IZ_p^*)]\le
\Delta[A]+\Delta[\IZ_p\times\{1\}]+\Delta[\{0\}\times \IZ_p^*]-2\le\\
&\le |B|+\Delta[\IZ_p]+\Delta[\IZ_p^*]-2=p-3+\Delta[C_p]+\Delta[C_{p-1}].
\end{aligned}
$$
\end{proof}

\section{Difference sizes of number intervals}

In this section we apply known information on difference sizes of number intervals to evaluating the difference sizes of finite cyclic groups. For integer numbers $a<b$ by $[a,b]$ we shall denote the order-interval $\{x\in\IZ:a\le x\le b\}$ in the group $\IZ$ of integer numbers.

For a natural number $n\in\IN$ by $\Delta[n]$ we shall denote the difference size of the interval $[0,n]$. It is equal to the difference size of the intervals $[1,n]$ and $[-n,n]$. Also we put $\eth[n]=\frac{\Delta[n]}{\sqrt{n}}$.

For example, the interval $[0,6]$ has difference size $\Delta[6]=4$ as witnessed by difference basis $\{0,1,4,6\}$. It is clear that $\Delta[n]$ is a non-decreasing function of the integer parameter $n$ and $\Delta[n](\Delta[n]-1)\ge n$, which implies that $\Delta[n]>\sqrt{n}$ for all $n\in\IN$.

Difference bases for the order intervals $[0,n]$ were studied by R\'edei and R\'enyi \cite{RR} who  proved that the limit $\lim_{n\to\infty}\eth[n]$ exists and is equal to $\inf_{n\in\IN}\eth[n]$. Moreover,
$$2.424...=2+\frac{4}{3\pi}\le \lim_{n\to\infty}\eth[n]^2=\inf_{n\in\IN}\eth[n]^2\le \eth[6]^2=\frac83=2.666...$$

These lower and upper bounds were improved by Leech \cite{Leech} and Golay \cite{Golay} who proved the following theorem.

\begin{theorem}[Leech-Golay]\label{t:LG} For any natural number $n$ we get the lower and upper bounds:
$$2.434...=2+\max_{0<\varphi<\pi}
\frac{2\sin(\varphi)}{\varphi+\pi}\le \lim_{n\to\infty}\eth[n]^2=\inf_{n\in\IN}\eth[n]^2\le \eth[6166]^2\le\frac{128^2}{6166}=2.6571...$$
\end{theorem}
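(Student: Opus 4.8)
The plan is to unbundle the displayed chain into three assertions — the equality $\lim_{n\to\infty}\eth[n]^2=\inf_{n\in\IN}\eth[n]^2$, the lower bound $2+\max_{0<\varphi<\pi}\frac{2\sin\varphi}{\varphi+\pi}\le\lim_n\eth[n]^2$, and the upper bound $\inf_n\eth[n]^2\le\eth[6166]^2\le\frac{128^2}{6166}$ — and to treat them separately. The first assertion is exactly the R\'edei--R\'enyi statement recalled just above the theorem, so I would simply cite it; it follows from the submultiplicativity $\Delta[(m+1)(n+1)-1]\le\Delta[m]\cdot\Delta[n]$ — witnessed by the fact that $D_m+(m+1)D_n$ is a difference basis of cardinality $|D_m|\cdot|D_n|$ for $[0,(m+1)(n+1)-1]$ whenever $D_m\subseteq[0,m]$ and $D_n$ are difference bases for $[0,m]$ and $[0,n]$ — fed into a Fekete-type argument together with the monotonicity of $\Delta$. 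So the real work sits in the two numerical bounds.

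For the lower bound I would argue by harmonic analysis on the circle. Let $B$ be a difference basis for $[-N,N]$ of cardinality $\Delta[N]=:k$, and set $G(\theta)=\sum_{b\in B}e^{ib\theta}$, so that $|G(\theta)|^2=\sum_{d\in\IZ}r(d)e^{id\theta}$ with $r(d)=|\{(x,y)\in B\times B: x-y=d\}|$. By the definition of a difference basis $r(d)\ge 1$ for $1\le|d|\le N$, while $r(0)=k$ and $r(d)\ge 0$ for every $d$. Subtracting the Dirichlet kernel $D_N(\theta)=\sum_{|d|\le N}e^{id\theta}=\frac{\sin((2N+1)\theta/2)}{\sin(\theta/2)}$, the real function $M:=|G|^2-D_N$ has all of its Fourier coefficients nonnegative ($r(0)-1$, $r(d)-1$ for $1\le|d|\le N$, and $r(d)$ for $|d|>N$), hence attains its maximum at $\theta=0$: $M(\theta)\le M(0)=k^2-(2N+1)$ for all $\theta$. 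Since $|G(\theta)|^2\ge 0$, this gives $k^2\ge(2N+1)-\min_\theta D_N(\theta)$. Finally, writing $\theta=2u/(2N+1)$ one sees $D_N(\theta)=(2N+1)\frac{\sin u}{u}(1+o(1))$ uniformly on bounded $u$-sets, while $D_N$ is $O(1)$ for $\theta$ bounded away from $0$, so $\min_\theta D_N(\theta)=(2N+1)\big(\min_{u>0}\frac{\sin u}{u}+o(1)\big)$; the global minimum of $\frac{\sin u}{u}$ lies at its first trough $u\in(\pi,2\pi)$, and the substitution $u=\pi+\varphi$ turns $-\min_{u>0}\frac{\sin u}{u}$ into $\max_{0<\varphi<\pi}\frac{\sin\varphi}{\varphi+\pi}$. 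Hence $\Delta[N]^2/N\ge 2+\max_{0<\varphi<\pi}\frac{2\sin\varphi}{\varphi+\pi}+o(1)$, and since the limit exists and equals the infimum, the first part upgrades this to the asserted bound on $\lim_n\eth[n]^2$.

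For the upper bound, the inequality $\inf_n\eth[n]^2\le\eth[6166]^2$ is trivial, so everything reduces to exhibiting one difference basis $B\subset\IZ$ with $|B|=128$ and $B-B\supseteq[-6166,6166]$, which yields $\eth[6166]^2=\Delta[6166]^2/6166\le\frac{128^2}{6166}$. Following Golay, I would write down such a $128$-element set explicitly — these record-holding bases are produced by combining a few very efficient short difference bases through the product construction of step one and the subadditive splitting of Proposition~\ref{p:BGN}(4), with the remaining free parameters pinned down by computer search — and then confirm by a direct (machine) check that its difference set covers $[-6166,6166]$.

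The hard part is entirely the lower bound, and inside it the single genuinely delicate point is the asymptotic $\min_\theta D_N(\theta)\sim(2N+1)\min_{u>0}\frac{\sin u}{u}$: one must verify that the extremal $\theta$ sits in the boundary layer $\theta\asymp 1/N$ — not out in the range where $D_N=O(1)$ — and control the rescaling uniformly there. Everything else (the nonnegativity of the Fourier coefficients of $|G|^2-D_N$, the ``maximum at $0$'' principle for such functions, and the trigonometric reparametrization $u=\pi+\varphi$) is routine once this setup is fixed; in effect the computation recovers Leech's argument, with Golay's explicit construction supplying the matching numerical ceiling.
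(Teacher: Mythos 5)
The paper offers no proof of this theorem: it is imported as a known result of Leech \cite{Leech} and Golay \cite{Golay}, so there is no internal argument to match yours against. Your reconstruction is correct and is essentially Leech's original proof, so the comparison is really between your sketch and the sources. Two remarks. First, the one step you flag as genuinely delicate --- verifying that the global minimizer of the Dirichlet kernel sits in the boundary layer $\theta\asymp 1/N$ --- is not actually needed: since your positivity argument gives $k^2\ge(2N+1)-D_N(\theta)$ for \emph{every} $\theta$, it suffices to evaluate $D_N$ at the single point $\theta^*=2u_0/(2N+1)$, where $u_0\in(\pi,2\pi)$ is the minimizer of $\frac{\sin u}{u}$; the elementary inequality $0<\sin x\le x$ on $(0,\pi)$ yields $D_N(\theta^*)=\frac{\sin u_0}{\sin(u_0/(2N+1))}\le(2N+1)\frac{\sin u_0}{u_0}$, whence $\Delta[N]^2\ge(2N+1)\bigl(1-\min_{u>0}\tfrac{\sin u}{u}\bigr)\ge N\bigl(2+\max_{0<\varphi<\pi}\tfrac{2\sin\varphi}{\varphi+\pi}\bigr)$ holds for every $N$ with no $o(1)$ terms and no asymptotic matching at all; the bound on $\inf_n\eth[n]^2$ then follows directly. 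Second, for the ceiling $\Delta[6166]\le128$ you defer to an unexhibited $128$-element ruler verified by machine; the paper later rederives exactly this bound structurally (Table~\ref{tab:ruler3}, obtained from Theorem~\ref{t:ub-Delta-n} with $n=6$, $k=0$, the Singer group $C_{993}$ of difference size $32$, and $\delta_0[C_{993}]=209$ from Table~\ref{tab:delta0}, giving $\Delta[6\cdot993+209-1]=\Delta[6166]\le 4\cdot 32=128$), so you could cite that construction rather than an explicit list. With these adjustments your outline closes into a complete proof; nothing in it is wrong, only slightly over-engineered at the kernel-minimization step and under-specified at the upper bound.
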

For small numbers $n$ the difference sizes $\Delta[n]$ of the intervals $[0,n]$ have been calculated by computer. The following table (taken from \cite{Leech} and \cite{Wiki}) gives the values of $\Delta[n]$ for numbers $n\le 61$ such that $\Delta[n+1]>\Delta[n]$.

\begin{table}[ht]
\caption{The values of $\Delta[n]$ for small $n$.}\label{tab:ruler1}
\begin{tabular}{|r|c|c|c|c|c|c|c|c|c|c|c|c|}
\hline
$n=$&1& 3&6&9&13&18&24&29&37&45&51&61\\
$\Delta[n]=$&2&3&4&5&6&7&8&9&10&11&12&13\\
$\eth[n]^2\approx$ &4&3&{2.666}&{2.777}&{2.769}&{2.722}&{2.666}& 2.793&{2.703}&{2.688}&{2.824}&{2.770}\\
\hline
\end{tabular}
\end{table}

This table shows that for $n\le 61$ the smallest value $\frac83$ of the difference characteristic $\eth[n]^2$ is attained for $n\in\{6,24\}$. Combining the difference basis $\{0,1,4,6\}$ for $[0,6]$ with the Singer difference sets, Leech \cite{Leech} and Golay \cite{Golay} have found four larger numbers $n$ with $\eth[n]^2<\frac83$. These (four) numbers are presented in Table~\ref{tab:ruler2}.

\begin{table}[ht]
\caption{Some numbers $n$ with $\eth[n]^2<{\frac83}$.}\label{tab:ruler2}
\begin{tabular}{|r|c|c|c|c|}
\hline
$n=$&4064&4713&5416&6166\\
$\Delta[n]\le$&104&112&120&128\\
$\eth^2[n]\le$&2.6615&2.6616&2.6588&2.6572\\
\hline
\end{tabular}
\end{table}

Difference sizes of the intervals yield upper bounds on the difference sizes of cyclic groups.

\begin{proposition}\label{p:c<i} For any natural number $n$ we get
$$\Delta[C_n]\le \Delta[k]\mbox{ \ and \ } \eth[C_n]\le\frac{\eth[k]}{\sqrt{2}}$$ where $k=\lceil\frac{n-1}2\rceil.$
\end{proposition}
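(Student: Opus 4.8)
The plan is to realize the cyclic group $C_n$ as a quotient of the integers and transport a difference basis for a symmetric interval $[-k,k]\subset\IZ$ down to $C_n$. First I would fix $k=\lceil\frac{n-1}2\rceil$ and observe that the canonical projection $\pi:\IZ\to\IZ/n\IZ\cong C_n$ restricts to a surjection from the interval $[-k,k]$ onto $C_n$: indeed $[-k,k]$ contains a full set of residues modulo $n$, since $2k+1\ge n$ by the choice of $k$ (the ceiling is exactly what makes this work for both parities of $n$). Then I would take a difference basis $D\subset[-k,k]$ for the interval $[-k,k]$ with $|D|=\Delta[[-k,k]]=\Delta[k]$ — here I am using the remark from the previous section that $\Delta[k]$ is by definition the difference size of $[0,k]$, which equals that of $[-k,k]$.

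The key step is that $\pi(D)$ is a difference basis for $C_n$. Given any $g\in C_n$, pick a representative $m\in[-k,k]$ with $\pi(m)=g$ (possible by the surjectivity just noted). Since $D$ is a difference basis for the interval $[-k,k]$ and $m\in[-k,k]$, there are $a,b\in D$ with $m=a-b$; applying $\pi$ gives $g=\pi(a)-\pi(b)$ with $\pi(a),\pi(b)\in\pi(D)$. Hence $\Delta[C_n]\le|\pi(D)|\le|D|=\Delta[k]$, which is the first claimed inequality.

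For the second inequality I would divide through by $\sqrt{n}$. From $\Delta[C_n]\le\Delta[k]=\eth[k]\sqrt{k}$ and $k=\lceil\frac{n-1}2\rceil\le\frac{n}2$ we get
$$\eth[C_n]=\frac{\Delta[C_n]}{\sqrt n}\le\frac{\eth[k]\sqrt k}{\sqrt n}\le\frac{\eth[k]}{\sqrt2}.$$
This completes the argument. I do not expect a serious obstacle: the only point requiring a little care is checking the inequality $2k+1\ge n$ (equivalently $k\ge\frac{n-1}2$) and the inequality $k\le\frac n2$ simultaneously for the ceiling $k=\lceil\frac{n-1}2\rceil$ — both hold because $\frac{n-1}2\le k\le\frac{n-1}2+\frac12=\frac n2$. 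Everything else is the routine "push a combinatorial structure through a group homomorphism" move, and the fact that a difference basis maps to a difference basis under any surjective homomorphism is immediate from the definition.
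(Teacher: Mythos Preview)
Your proof is correct and essentially identical to the paper's: both push a minimal difference basis $D$ for $[-k,k]$ through the quotient homomorphism $\IZ\to C_n$, use $2k+1\ge n$ to get surjectivity of $[-k,k]\to C_n$, and then invoke $k\le n/2$ for the second inequality. One cosmetic point: you stipulate $D\subset[-k,k]$, which the paper does not require (it takes $D\subset\IZ$ with $D-D\supset[-k,k]$) and which your own argument never actually uses, so the extra containment is harmless but superfluous.
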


\begin{proof} Given a natural number $n$, consider the homomorphism $\gamma:\IZ\to C_n$, $\gamma:t\mapsto e^{\frac{2\pi i}nt}$. For the number $k=\lceil\frac{n-1}2\rceil$, choose a subset $D\subset\IZ$ of cardinality $|D|=\Delta[k]$ such that $D-D$ contains the interval  $[-k,k]$.  Taking into account that $n\le 2k+1=|[-k,k]|$, we conclude that $\gamma(D)\gamma(D)^{-1}=\gamma(D-D)\supset\gamma([-k,k])=C_n$, which means that the set $B=\gamma(D)$ is a difference basis for the group $C_n$ and hence $\Delta[C_n]\le|B|\le|D|=\Delta[k]$.

Observe that $k=\big\lceil\frac{n-1}2\big\rceil\le\frac{n}2$ and hence
$$\eth[C_n]=\frac{\Delta[C_n]}{\sqrt{n}}\le \frac{\Delta[k]}{\sqrt{k}}\frac{\sqrt{k}}{\sqrt{n}}= \eth[k]\cdot\sqrt{\frac{k}n}\le\eth[k]\sqrt{\frac{n/2}{n}}=\frac{\eth[k]}
{\sqrt{2}}.$$
\end{proof}

Proposition~\ref{p:c<i} and Theorem~\ref{t:LG} allow us to evaluate lower and
upper limits of the difference characteristics of cyclic groups.

\begin{corollary}\label{c:l+ub} For every natural number $n$ we have the lower and upper bounds:
$$1=\inf_{n\in\IN}\eth[C_n]=
\liminf_{n\to\infty}\eth[C_n]\le \limsup_{n\to\infty}\eth[C_n]\le \frac1{\sqrt{2}}\cdot\inf_{n\in\IN}\eth[n]\le \frac{\eth[6166]}{\sqrt{2}}\le \frac{64}{\sqrt{3083}}=1.1526...<\frac2{\sqrt{3}}=1.1547...$$
\end{corollary}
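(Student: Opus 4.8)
The plan is to assemble Corollary~\ref{c:l+ub} from three ingredients already on the table: the general lower bound $\eth[C_n]\ge 1$, the Rédei–Rényi/Leech–Golay fact that $\inf_n\eth[n]=\lim_n\eth[n]$ together with the explicit numeric value $\eth[6166]^2\le\frac{128^2}{6166}$, and Proposition~\ref{p:c<i}, which converts interval bounds into cyclic-group bounds with a gain of a factor $\frac1{\sqrt2}$. The chain of (in)equalities in the statement should be proved from left to right.

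First I would treat the left end. Since any difference basis $B$ for a set $A$ satisfies $|A|\le|B|^2$, we get $\Delta[C_n]\ge\sqrt{n}$, i.e.\ $\eth[C_n]\ge 1$ for every $n$; this already gives $1\le\inf_n\eth[C_n]$ and $1\le\liminf_n\eth[C_n]$. For the reverse inequality $\inf_n\eth[C_n]\le 1$ (hence also $\liminf_n\eth[C_n]\le 1$, so both equal $1$), I would invoke an exact evaluation coming from a known construction: by Singer's Theorem~\ref{t:Singer}, for every prime power $q$ the group $C_{q^2+q+1}$ has $\Delta[C_{q^2+q+1}]=q+1=\frac{1+\sqrt{4(q^2+q+1)-3}}2$, so $\eth[C_{q^2+q+1}]^2=\frac{(q+1)^2}{q^2+q+1}\to 1$ as $q\to\infty$. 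Since there are infinitely many prime powers, this forces $\inf_n\eth[C_n]=1$ and $\liminf_n\eth[C_n]=1$. The trivial inequality $\liminf\le\limsup$ supplies the middle step.

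Next, the upper bound on $\limsup_n\eth[C_n]$. Apply Proposition~\ref{p:c<i}: for each $n$, with $k=\lceil\frac{n-1}2\rceil$ we have $\eth[C_n]\le\frac{\eth[k]}{\sqrt2}$. As $n\to\infty$ we have $k\to\infty$, so $\limsup_n\eth[C_n]\le\frac1{\sqrt2}\limsup_k\eth[k]$; and by the Rédei–Rényi result quoted before Theorem~\ref{t:LG} (the limit $\lim_k\eth[k]$ exists and equals $\inf_k\eth[k]$), the right-hand side is exactly $\frac1{\sqrt2}\inf_k\eth[k]$. That gives $\limsup_n\eth[C_n]\le\frac1{\sqrt2}\inf_{n}\eth[n]$. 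Finally, $\inf_n\eth[n]\le\eth[6166]\le\frac{128}{\sqrt{6166}}$ by Theorem~\ref{t:LG}, so $\frac1{\sqrt2}\inf_n\eth[n]\le\frac{128}{\sqrt{2\cdot 6166}}=\frac{128}{\sqrt{12332}}=\frac{64}{\sqrt{3083}}$; a one-line numerical check gives $\frac{64}{\sqrt{3083}}=1.1526\ldots<\frac{2}{\sqrt3}=1.1547\ldots$, closing the chain.

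There is really no serious obstacle here — the statement is a corollary precisely because each link is either trivial or already established. The only point demanding a little care is the passage $\limsup_n\eth[C_n]\le\frac1{\sqrt2}\inf_k\eth[k]$: one must note that as $n$ ranges over $\IN$ the auxiliary index $k=\lceil\frac{n-1}2\rceil$ also ranges over all sufficiently large integers, so controlling $\limsup$ of the composed sequence by $\inf\eth[k]$ (rather than merely $\limsup\eth[k]$, which is the same number by Rédei–Rényi) is legitimate. Everything else is bookkeeping and the final arithmetic comparison of $\frac{64}{\sqrt{3083}}$ with $\frac{2}{\sqrt3}$.
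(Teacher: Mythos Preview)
Your proposal is correct and follows essentially the same approach the paper intends: the corollary is stated without a separate proof, merely as a consequence of Proposition~\ref{p:c<i} and Theorem~\ref{t:LG}, together with Singer's Theorem~\ref{t:Singer} for the equality $\inf_n\eth[C_n]=\liminf_n\eth[C_n]=1$. Your write-up spells out exactly these steps, including the use of the R\'edei--R\'enyi fact $\lim_k\eth[k]=\inf_k\eth[k]$ to pass from $\limsup$ to $\inf$, and the arithmetic at the end is right.
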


Corollary~\ref{c:l+ub} implies that $\eth[C_n]<\frac2{\sqrt{3}}$ for all sufficiently large $n$. In Theorem~\ref{t:huge} we shall show that this upper bound holds for all $n\ge 2\cdot 10^{15}$.

At first we find some upper bounds of the difference sizes of the intervals $[-n,n]$, using the approach first exploited by R\'edei and R\'enyi \cite{RR}, and then developed by Leech \cite{Leech} and Golay \cite{Golay}.

To write down these upper bounds, we shall need some information on the numbers $\delta_k[C_m]$, which are defined as follows. For a natural number $m$ and a non-negative number $k<\Delta[C_m]$ let $\delta_k[C_m]$ be the largest integer number $d<m$ for which there exists a set $B\subset [0,m]$ of cardinality $|B|=\Delta[C_m]$ such that $B-B+m\IZ=\IZ$ and $|B\cap[0,d)|\le k$. In \cite{Golay} the numbers $\delta_0[C_m]$ were denoted by $c_{\max}$ and were calculated for all cyclic groups $C_m$ of order $m=1+q+q^2$ where $q\le32$ is a prime power:
\begin{table}[ht]
\caption{The numbers $\delta_0[C_m]$ for cyclic groups of order $m=1+q+q^2$ for a prime power $q$.}\label{tab:delta0}
\begin{tabular}{|c|c|c|c|c|c|c|c|c|c|c|c|c|c|c|c|c|c|c|}
\hline
$q$&2&3&4&5&7&8&9&11&13&16&17&19&23&25&27&29&31&32\\
$m$&7&13&21&31&57&73&91&133&183&273&307&381&553&651&757&871&993&1057\\
$\delta_0[C_m]$&4&7&10&14&22&28&36&48&56&72&91&98&128&159&172&191&209&198\\
\hline
\end{tabular}
\end{table}

This table is completed by Table~\ref{tab:delta-k} giving the values of
$\delta_k[C_m]$ for positive $k<m\le 307$. These values are found by computer.

\begin{table}[ht]
\caption{The values of the numbers $\delta_k[C_m]$ for some $k$
and $m$.}\label{tab:delta-k}
\begin{tabular}{|cc|c|c|c|c|c|c|c|c|c|c|c|c|c|c|c|c|c|}
\hline
&$k$&0&1&2&3&4&5&6&7&8&9&10&11&12&13&14&15&16\\
$m$&&&&&&&&&&&&&&&&&&\\
\hline
7&&4&6&7&-&-&-&-&-&-&-&-&-&-&-&-&-&-\\
13&&7&10&12&13&-&-&-&-&-&-&-&-&-&-&-&-&-\\
21&&10&13&17&20&21&-&-&-&-&-&-&-&-&-&-&-&-\\
31&&14&19&25&28&30&31&-&-&-&-&-&-&-&-&-&-&-\\
39&&18&23&29&35&37&38&39&-&-&-&-&-&-&-&-&-&-\\
57&&22&29&38&43&50&54&56&57&-&-&-&-&-&-&-&-&-\\
73&&28&38&41&52&58&66&70&72&73&-&-&-&-&-&-&-&-\\
91&&36&46&51&62&71&74&84&88&90&91&-&-&-&-&-&-&-\\
133&&48&61&68&79&94&102&114&120&125&130&132&133&-&-&-&-&-\\
183&&56&69&92&98&109&128&139&153&158&168&175&180&182&183&-&-&-\\
273&&72&92&110&130&145&152&175&184&202&218&226&242&258&266&270&272&273\\
307&&91&106&128&152&160&171&199&214&232&245&261&276&284&292&300&304&306\\
\hline
\end{tabular}
\end{table}

\begin{theorem}\label{t:lb-delta0} For any natural number $m\ge 3$ we get the lower bound $$\delta_0[C_m]\ge \Big\lfloor\frac{m}{\Delta[C_m]}\Big\rfloor.$$ If $m=1+q+q^2$ for some prime power $q$, then $$\delta_0[C_m]\ge\max\Big\{q+2,\Big\lfloor\frac{q^2+q+1}{q-\sqrt{q}+3}\Big\rfloor\Big\}\ge
\max\{q+2,q+\sqrt{q}-3\}.$$
\end{theorem}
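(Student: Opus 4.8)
The plan is to prove the two lower bounds for $\delta_0[C_m]$ separately. The first bound $\delta_0[C_m]\ge\big\lfloor\frac m{\Delta[C_m]}\big\rfloor$ should follow from a pigeonhole/averaging argument. Start with any difference basis $B\subset[0,m)$ of cardinality $|B|=\Delta[C_m]$ with $B-B+m\IZ=\IZ$. Consider the $m$ cyclic translates $B,B+1,\dots,B+m-1$ inside $\IZ/m\IZ$; since $\sum_{j=0}^{m-1}|(B+j)\cap[0,d)|=d\,|B|$ for any $d$, some translate $B+j$ satisfies $|(B+j)\cap[0,d)|\le \frac{d|B|}{m}$. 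Choosing $d=\big\lfloor\frac m{\Delta[C_m]}\big\rfloor$ makes $\frac{d|B|}m\le 1$, in fact we want $\le$ an integer, so one has to be a touch careful: either the inequality is already $<1$ forcing the intersection to be empty, i.e. $k=0$ works with this $d$, or pick $d$ so that $d|B|/m<1$; since $\big\lfloor m/|B|\big\rfloor\cdot|B|\le m$ one must check the boundary case $d|B|=m$ separately, where a translate with $(B+j)\cap[0,d)=\emptyset$ still exists because not every residue class mod $|B|$ can be hit if $B$ sits in $[0,m)$ — this edge case is the one mild subtlety. Then the translated basis witnesses $\delta_0[C_m]\ge d$ (using $d<m$, which holds since $\Delta[C_m]\ge 2$ for $m\ge 3$).

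For the second bound, specialize to $m=1+q+q^2$. By Singer's Theorem~\ref{t:Singer}, $\Delta[C_m]=q+1$, so the first bound already gives $\delta_0[C_m]\ge\big\lfloor\frac{q^2+q+1}{q+1}\big\rfloor=q$, which is too weak — we need $q+2$ and the stronger $\big\lfloor\frac{q^2+q+1}{q-\sqrt q+3}\big\rfloor$. The idea for $\delta_0\ge q+2$: take the Singer difference set $D$, realized as a perfect difference set in $\IZ/m\IZ$ of size $q+1$; among its $q+1$ cyclic translates of the gap structure, exploit that consecutive elements of $D$ (in cyclic order) have gaps summing to $m$ over $q+1$ gaps, so the largest gap is at least $\big\lceil m/(q+1)\big\rceil=q+1$; placing the window $[0,d)$ inside a maximal gap of $D+j$ gives an empty intersection with $d$ as large as that gap minus one, i.e. $\ge q$. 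To push from $q$ to $q+2$ one needs that a perfect difference set cannot have all its gaps equal to the average; since $m=q^2+q+1$ is not divisible by $q+1$ (the remainder is $q$), the gaps cannot all equal $q$, so the max gap is at least $q+1$, and a second-moment or counting argument on the gap multiset (perfect difference sets have every nonzero residue as a difference exactly once, which constrains how many small gaps can occur) should bootstrap the maximal gap to $\ge q+3$, giving $d=q+2$.

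For the final, sharpest piece $\delta_0[C_m]\ge\big\lfloor\frac{q^2+q+1}{q-\sqrt q+3}\big\rfloor$, the natural route is again the averaging bound but applied to a \emph{better} basis than the full Singer set: use the Bose–Chowla construction (Theorem~\ref{t:Bose}) or a punctured/shifted Singer set to produce a set $B$ of size only about $q-\sqrt q+3$ whose difference set covers a long interval $[0,\delta)$ cyclically — then $\delta_0\ge\delta$ directly, with $\delta$ matching the claimed floor. Concretely, one expects that deleting roughly $\sqrt q$ suitably chosen elements from the Singer set (those clustered near one point, using the subgroup structure of $C_{q-1}\subset$ the multiplicative setup) still leaves a set that is a difference basis for an interval of length $\frac{q^2+q+1}{q-\sqrt q+3}$; the inequality $\big\lfloor\frac{q^2+q+1}{q-\sqrt q+3}\big\rfloor\ge q+\sqrt q-3$ is then a routine algebraic estimate (clear denominators: $(q+\sqrt q-3)(q-\sqrt q+3)=q^2-(\sqrt q-3)^2=q^2-q+6\sqrt q-9\le q^2+q+1$ for $q\ge$ small). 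The main obstacle is the second bullet-point's bootstrap from $q$ to $q+2$ and the construction of the size-$(q-\sqrt q+3)$ covering set: these require genuinely using the perfect-difference-set property (each nonzero difference exactly once) rather than just the cardinality, so the heart of the proof is a careful combinatorial analysis of the gap structure of Singer difference sets, and matching it against Table~\ref{tab:delta0} as a sanity check.
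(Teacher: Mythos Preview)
Your argument for the first bound $\delta_0[C_m]\ge\lfloor m/\Delta[C_m]\rfloor$ is essentially the paper's: an averaging/pigeonhole over cyclic shifts, with the boundary case $d\cdot|B|=m$ handled by observing that equal intersections would force $B$ to be a coset of a proper subgroup, contradicting $BB^{-1}=C_m$. That part is fine.

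The second part, however, has a genuine gap. Your proposed route to the bound $\big\lfloor\frac{q^2+q+1}{q-\sqrt q+3}\big\rfloor$ is a misreading of what the denominator represents. It is \emph{not} the size of some auxiliary covering set obtained by puncturing the Singer set; the definition of $\delta_0[C_m]$ requires a basis of size exactly $\Delta[C_m]=q+1$ that covers \emph{all} of $C_m$, so producing a smaller set that only covers an interval does not witness anything about $\delta_0$. Likewise, the ``bootstrap'' from max gap $\ge q+1$ to $\ge q+3$ via second moments is asserted but not carried out, and it is unclear what combinatorial constraint on a perfect difference set would force this without further input.

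The paper's actual argument is Fourier-analytic and uses a property you have not invoked: viewing the Singer difference set $D\subset C_m\subset\IC$ as complex $m$-th roots of unity, the perfect-difference-set identity gives
\[
\Big|\sum_{z\in D}z\Big|^2=\sum_{x,y\in D}xy^{-1}=|D|+\sum_{z\in C_m\setminus\{1\}}z=q.
\]
So the centre of mass of $D$ has modulus $\sqrt q$. A weighted averaging argument over rotations then shows that some half-circle contains at most $\frac{1+q-\sqrt q}{2}$ points of $D$. Pigeonholing that sparse half-circle into $l=\frac{1+q-\sqrt q}{2}+1$ equal arcs yields an arc of angular width $\pi/l$ disjoint from $D$, which contains at least $\big\lfloor\frac{m}{2l}\big\rfloor=\big\lfloor\frac{q^2+q+1}{q-\sqrt q+3}\big\rfloor$ consecutive elements of $C_m$; this is the source of the denominator. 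The bound $\delta_0\ge q+2$ is then derived from this for $q\ge16$ and read off Table~\ref{tab:delta-k} for smaller $q$, rather than from a separate gap-structure argument. The key idea you are missing is the character-sum estimate $|\sum_{z\in D}z|=\sqrt q$; without it, the combinatorics of gaps alone does not seem to reach the $\sqrt q$ improvement in the denominator.
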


\begin{proof} Given a natural number $m\ge 3$, fix a difference basis $D\subset C_m$ of cardinality $|D|=\Delta[C_m]$. We lose no generality assuming that $D$ contains the unit $1$ of the group $C_m$. It follows that $|D|<m$. Let $g=e^{\frac{2\pi}mi}$ be a generator of the cyclic group $C_m$. Let $d=\big\lfloor\frac{m}{\Delta[C_m]}\big\rfloor$ and consider the interval $I=\{g^j:0\le j<d\}\subset C_m$, containing $d$ elements of $C_m$.

\begin{claim}\label{cl:empty} There exists $c\in C_m$ such that $cI\cap D=\emptyset$.
\end{claim}

\begin{proof}
For a subset $A\subset C_m$ denote by $\chi_A:C_m\to\{0,1\}$ its characteristic function (which means that $\chi_A^{-1}(1)=A$).
Assume that $cI\cap D\ne \emptyset$ for all $c\in C_m$ and observe that
$$
\begin{aligned}
m&\le\sum_{c\in C_m}|cI\cap D|=\sum_{c\in C_m}\sum_{x\in C_m}\chi_{cI}(x)\chi_D(x)=\sum_{x\in C_m}\chi_D(x)\sum_{c\in C_m}\chi_{I}(c^{-1}x)=\\
&=\sum_{x\in C_m}\chi_D(x)|I|=|I|\cdot|D|=d\cdot \Delta[C_m]\le m,
\end{aligned}
$$which implies that $d\cdot\Delta[C_m]=m$ and $|cI\cap D|=1$ for all $c\in C_m$. The latter property of $D$ can be used to show that $D$ coincides with the subgroup $H$ generated by $g^d$, which is not possible as $DD^{-1}=G\ne H$.
\end{proof}

By Claim~\ref{cl:empty}, there exists $c\in C_m$ such that $cI\cap D$ is empty.
Then $B=c^{-1}D$ is a difference basis for $C_m$ with $I\cap B=\emptyset$. It
follows that the set $A=\{a\in \IZ:0<a\le m,\;\;g^a\in B\}$ has
$A\cap[0,d)=\emptyset$ and witnesses that $\delta_0[C_m]\ge
d=\big\lfloor\frac{m}{\Delta[C_m]}\big\rfloor$.
\smallskip

Now assume that $m=1+q+q^2$ for some prime power $q$. In this case $D$ is a difference set of cardinality $|D|=1+q$. We recall that $C_m$ is the multiplicative subgroup $\{z\in\IC:z^m=1\}$ of the unit circle $\IT=\{z\in\IC:|z|=1\}$ on the complex plane. 
Let $I:=\{e^{i\phi}:0\le\phi<\pi\}\subset\IT$ be the upper  half-circle and observe that $\IT=I\cup(-I)$.

\begin{claim}\label{cl} For some $z\in \IT$ the set $I\cap zD$ has cardinality $|I\cap zD|\le\frac{1+q-\sqrt{q}}2$.
\end{claim}

\begin{proof} For a subset $A\subset \IT$ by $\chi_A:\IT\to\{0,1\}$ we denote the characteristic function of the set $A$ in $\IT$, which means that $\chi_A^{-1}(1)=A$. Observe that each element $z\in C_m\cap(iI^{-1})$ has positive real part and the sum
$r:=\sum_{z\in C_m\cap iI^{-1}}z$ is a positive real number.

Now consider the complex number $\delta:=\sum_{z\in C_m}\chi_D(z)z$ and observe that
$$|\delta|^2=\delta\bar\delta=\sum_{x,y\in C_m}\chi_D(x)\chi_D(y)x\bar y=\sum_{x\in C_m}\chi_D(x)^2xx^{-1}+\sum_{x\ne y}\chi_D(x)\chi_D(y)xy^{-1}=|D|+\sum_{z\in C_m\setminus\{1\}}z=|D|-1=q.$$
Then $\sum_{z\in D}z=\delta=\sqrt{q}\,e^{-i\psi}$ for some $\psi\in\IR$ and hence the set $D_\psi:=De^{i\psi}$ has $\sum_{z\in D_\psi}z=\sqrt{q}$.

Let $C_\psi=C_me^{i\psi}\subset\IT$. Observe that
$$
\begin{aligned}
r\cdot\sqrt{q}&=\sum_{x\in C_m}\chi_{iI^{-1}}(x)x\cdot\sum_{y\in C_\psi}\chi_{D_\psi}(y)y=\sum_{z\in C_{\psi}}\sum_{{(x,y)\in C_m\times C_\psi}\atop{xy=z}}\chi_{iI^{-1}}(x)\cdot\chi_{D_\psi}(y)\cdot z=\\
&=\sum_{z\in C_\psi}\sum_{y\in C_\psi}\chi_{iI^{-1}}(zy^{-1})\cdot\chi_{D_\psi}(y)\cdot z=\sum_{z\in C_\psi}\sum_{y\in C_\psi}\chi_{zi^{-1}I}(y)\cdot\chi_{D_\psi}(y)\cdot z=\sum_{z\in C_\psi}|i^{-1}zI\cap De^{i\psi}|\cdot z.
\end{aligned}
$$

Assuming that $|I\cap zD|>c:=\frac{1+q-\sqrt{q}}2$ for all $z\in \IT$, we conclude that $$|i^{-1}zI\cap De^{i\psi}|=|D|-|izI\cap De^{i\psi}|<(1+q)-c$$ for all $z\in\IT$. Taking into account that each complex number $z\in iI$ has negative real part $\Re(z)$,
$$0<\sum_{z\in C_\psi\cap(-iI)}\Re(z)\le\sum_{z\in C_m\cap(-iI)}\Re(z)=r$$ and $$0=\sum_{z\in C_\psi}\Re(z)=\sum_{z\in C_\psi\setminus iI}\Re(z)+\sum_{z\in C_\psi\cap iI}\Re(z),$$we conclude that
$$
\begin{aligned}
r\sqrt{q}&=\sum_{z\in C_\psi}|i^{-1}zI\cap De^{i\psi}|\cdot z=\sum_{z\in  C_\psi\cap(-iI)}|i^{-1}zI\cap De^{i\psi}|\cdot\Re(z)+\sum_{z\in C_\psi\cap(iI)}|i^{-1}zI\cap De^{i\psi}|\cdot\Re(z)<\\
&<(1+q-c)\sum_{z\in C_\psi\cap (-iI)}\Re(z)+c\sum_{z\in C_\psi\cap iI}\Re(z)=(1+q-2c)\sum_{z\in C_\psi\setminus iI}\Re(z)\le(1+q-2c)r,
\end{aligned}
$$
which implies that $\sqrt{q}<(1+q-2c)$ and hence $c<\frac{1+q-\sqrt{q}}2$. But this contradicts the definition of the constant $c$.
\end{proof}

By Claim~\ref{cl}, there exists a complex number $c\in \IT$ such that $|I\cap cD|\le\frac{1+q-\sqrt{q}}2$. Let $l=|I\cap cD|+1$ and $J=\{e^{ it}:0\le t<\pi/l\}\subset\IT$. Observe that the arc $I$ can be covered by $l>|I\cap cD|$ disjoint copies of the arc $J$. By the Pigeonhole Principle, for some $z\in\IT$ the arc $zJ$ is disjoint with the set $cD$. Then the arc $c^{-1}zJ$ is disjoint with the set $D$.
It is easy to see that the arc $c^{-1}zJ$ contains at least $k:=\lfloor\frac{m}{2l}\rfloor$ consecutive points of the group $C_m$. Therefore, the set $C_m\setminus D$ contains $k$ consecutive points. Replacing $D$ by a suitable shift, we can assume that those $k$ consecutive points form the set $\{g^j:0\le j<k\}$ where $g=e^{\frac{2\pi}mi}$ is the generator of the cyclic group $C_m$. Then the set $A=\{j\in[0,m]:g^j\in D\}$ is disjoint with the set $[0,k)$ and witnesses that
$$
\begin{aligned}
\delta_0[C_m]&\ge k=\left\lfloor\frac{m}{2l}\right\rfloor\ge \frac{m}{2l}=\Big\lfloor\frac{m}{2(|I\cap cD|+1)}\Big\rfloor=\Big\lfloor\frac{m}{1+q-\sqrt{q}+2}\Big\rfloor=\Big\lfloor\frac{q^2+q+1}{q-\sqrt{q}+3}\Big\rfloor>\\
&\frac{q^2+q+1}{q-\sqrt{q}+3}-1=
q+\sqrt{q}-2-\frac{4(\sqrt{q}-1)}{q-\sqrt{q}+3}>q+\sqrt{q}-3.
\end{aligned}
$$
If $q\ge 16$, then $$\delta_0(C_m)\ge \Big\lfloor\frac{q^2+q+1}{q-\sqrt{q}+3}\Big\rfloor\ge
\Big\lfloor\frac{q^2+q+1}{q-1}\Big\rfloor=q+2.$$
 If $q<16$, then the equality $\delta_0[C_m]\ge q+2$  follows from Table~\ref{tab:delta-k}.
\end{proof}

The numbers $\delta_k[C_m]$ are used in the following theorem giving an upper bound for the difference sizes of intervals.

\begin{theorem}\label{t:ub-Delta-n} For any non-negative integer numbers $n,m,k$ with $k<m$ we get
the upper bound
$$\Delta\big[nm+\delta_k[C_m]-1\big]\le \Delta[n]\cdot\Delta[C_m]+k.$$
\end{theorem}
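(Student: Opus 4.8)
The plan is to construct an explicit difference basis for the interval $[0,nm+\delta_k[C_m]-1]$ by combining a difference basis for $[0,n]$ (scaled by $m$) with a ``short'' difference basis for the cyclic group $C_m$ of the special type encoded in the definition of $\delta_k[C_m]$. Concretely, let $d:=\delta_k[C_m]$ and pick, by definition of $\delta_k[C_m]$, a set $B\subset[0,m]$ with $|B|=\Delta[C_m]$, $B-B+m\IZ=\IZ$, and $|B\cap[0,d)|\le k$. Also pick $E\subset\IZ$ with $|E|=\Delta[n]$ and $E-E\supseteq[-n,n]$; since translating $E$ does not change $E-E$, I may assume $E\subseteq[0,n]$ and $0\in E$. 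The candidate difference basis for $[0,nm+d-1]$ is
$$
D:=\bigl(mE\bigr)\;\cup\;\bigl(B\cap[0,d)\bigr),
$$
i.e. the scaled copy $mE=\{me:e\in E\}$ together with the ``initial block'' of $B$ lying below $d$.

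First I would bound $|D|$. The set $mE$ has $|E|=\Delta[n]$ elements, and it is enough to observe that $mE$ already contains a point congruent to each residue we need — in particular I want $mE$ to serve as the ``coarse'' part of a mixed-radix representation while $B$ (reduced mod $m$) handles residues. The cardinality count gives $|D|\le|mE|+|B\cap[0,d)|\le\Delta[n]+k$ — but this misses the $\Delta[C_m]$ term, so the actual basis must incorporate all of $B$, not just its initial block. The correct candidate is therefore
$$
D:=\bigl(mE\bigr)\;\cup\;B,
$$
so that $|D|\le|mE|+|B|=\Delta[n]\cdot\Delta[C_m]$ only if $mE$ and $B$ overlap suitably; in general $|D|\le\Delta[n]+\Delta[C_m]$, which is far too small, so the genuine construction must instead take $D:=mE+B=\{me+b:e\in E,\ b\in B\}$, a sumset of size at most $\Delta[n]\cdot\Delta[C_m]$, and then \emph{enlarge} it by the $k$ extra points $B\cap[0,d)$ to handle the top of the interval. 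Thus $D:=(mE+B)\cup(B\cap[0,d))$ has $|D|\le\Delta[n]\cdot\Delta[C_m]+k$, matching the claimed bound.

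Next I would verify that $D-D\supseteq[0,nm+d-1]$, equivalently (by symmetry) $[-(nm+d-1),nm+d-1]$. Take any integer $g$ with $0\le g\le nm+d-1$. Write $g=qm+s$ with $0\le s<m$ and $0\le q\le n$ (the case $q=n$ forces $s\le d-1$). Since $B-B$ covers a full residue system mod $m$, choose $b_1,b_2\in B$ with $b_1-b_2\equiv s\pmod m$, say $b_1-b_2=s+tm$ for some integer $t$; because $B\subseteq[0,m]$ we have $|t|\le 1$. Then I need $e_1,e_2\in E$ with $e_1-e_2=q-t$; this is possible whenever $q-t\in[-n,n]$, which holds since $0\le q\le n$ and $|t|\le1$ — with a boundary case when $q=n$, $t=-1$ giving $q-t=n+1\notin[-n,n]$. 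That boundary case is exactly where the extra points $B\cap[0,d)$ earn their keep: when $q=n$ (so $s\le d-1$), I instead represent $g=nm+s$ using a point $me+b$ with $e\in E$, $me=nm$ (possible since $n\in E$ after arranging $\{0,n\}\subseteq E$, which costs nothing), and an element $b\in B\cap[0,d)$, subtracting an element of $B\cap[0,d)$ — here I use $|B\cap[0,d)|\le k$ only for the cardinality count, while for the covering I use that $(B\cap[0,d))-(B\cap[0,d))$ together with the top block $nm+mE$ reaches every value in $\{nm,\dots,nm+d-1\}$. Assembling $e_1-e_2=q-t$ and $b_1-b_2=s+tm$ gives $(me_1+b_1)-(me_2+b_2)=m(q-t)+(s+tm)=mq+s=g$, as desired.

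The main obstacle is the careful bookkeeping at the \emph{top end} of the interval, i.e.\ values $g\in[nm,nm+d-1]$: an unrestricted mixed-radix argument only covers $[0,nm-1]$ (or a bit more), and pushing the range up to $nm+d-1$ is precisely what the quantity $\delta_k[C_m]$ was designed to measure — it records how far a $C_m$-difference-basis can be made to ``avoid'' its first $d$ positions using only $k$ points there, so that the remaining $\Delta[C_m]-k$ points sit in $[d,m]$ and, when carried by $mE$, reach $nm+d-1$ without needing a carry beyond $E-E=[-n,n]$. I would therefore organize the covering into two cases, $g<nm$ handled by the sumset $mE+B$ via the carry analysis above, and $nm\le g\le nm+d-1$ handled by combining the shifted block $nm+(B\setminus[0,d))$ (reachable as differences inside $mE+B$) with the $k$ added points, checking that the gaps below $d$ are filled by $(B\cap[0,d))-B$. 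Once both cases are dispatched, $D$ is a difference basis for $[0,nm+d-1]$ of size at most $\Delta[n]\cdot\Delta[C_m]+k$, which is the assertion.
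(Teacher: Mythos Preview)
Your overall architecture is right --- form the sumset $mE+B$ and then add $k$ extra points to handle the top of the interval --- and this is exactly what the paper does. But your placement of the extra points is wrong, and this breaks the argument.

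You take $D:=(mE+B)\cup(B\cap[0,d))$. Since you arranged $0\in E$, you already have $B=m\cdot 0+B\subset mE+B$, so $B\cap[0,d)\subset mE+B$ and your ``extra'' points add nothing: your $D$ is just $mE+B$, of size at most $\Delta[n]\cdot\Delta[C_m]$. Now the boundary case really does fail. For $g=nm+s$ with $0\le s\le d-1$, pick $b_1,b_2\in B$ with $b_1-b_2\equiv s\pmod m$; since $B\subset[0,m]$ this forces $b_1-b_2\in\{s,s-m\}$. If $b_1-b_2=s-m$ then you need $e_1-e_2=n+1$, which is impossible because $E-E\subset[-n,n]$. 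Your suggested repair, using $(B\cap[0,d))-(B\cap[0,d))$, cannot work either: that difference set has at most $k^2-k+1$ elements and there is no reason it should contain every $s\in[0,d-1]$.

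The fix --- and this is what the paper does --- is to shift the $k$ extra points \emph{past the top} of $mE$ rather than leaving them at the bottom. With $\{0,n\}\subset E$ (in the paper's notation, $\lambda-l=n$), take the extra points to be $m(n+1)+\bigl(B\cap[0,d)\bigr)$. In the bad sub-case $b_1-b_2=s-m$ one has $b_1=b_2-(m-s)\le m-(m-s)=s<d$, so $b_1\in B\cap[0,d)$; then
\[
g=(n+1)m+b_1-b_2=\bigl(m(n+1)+b_1\bigr)-\bigl(m\cdot 0+b_2\bigr)
\]
is a difference of elements of the enlarged set. Note that in this representation only $b_1$ is forced to lie in $B\cap[0,d)$, while $b_2$ ranges over all of $B$; your sketch had both coming from the small block, which is why it could not cover enough values.
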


\begin{proof}  Fix a difference basis $D\subset\IZ$ for the interval $[-n,n]$ of cardinality $|D|=\Delta[n]$.

By the definition of the number $\delta_k[C_m]$, there exists a set  $A\subset
[0,m]$ of cardinality $|A|=\Delta[C_m]$ such that $A-A+m\IZ=\IZ$ and $|A\cap
[0,\delta_k[C_m])|\le k$. Find two numbers $\lambda,l\in D$ with $\lambda-l=n$.
It is clear that the set $$B:=\{a+md:a\in A,\;d\in D\}\cup\{a+m(\lambda+1):a\in
A\cap[0,\delta_k[C_m])\}$$ has cardinality $$|B|\le |D|\cdot
|A|+|A\cap[0,\delta_k[C_m])|\le\Delta[n]\cdot\Delta[C_m]+k.$$ We claim that the
interval $J=\{x\in \IZ:|x|<mn+\delta_k[C_m]\}$ is contained in $B-B$. Since the
set $B-B$ is symmetric, it suffices to show that each positive number $x\in J$
is contained in $B-B$. Write $x$ as $x=my+z$ for some integer numbers $y,z$
such that $0\le y\le n$ and $0\le z<m$. By the choice of $A$, there are numbers
$a,b\in A$ such that $z=a-b+mj$ for some $j\in\IZ$. Taking into account that
$|mj|=|a-b-z|\le |a-b|+|z|<m+m=2m$, we conclude that $|j|\le 1$ and hence
$|y+j|\le n+1$.

It follows that $x=my+z=m(y+j)+a-b$.
If $|y+j|\le n$, then we can choose two numbers $u,v\in D$ such that $y+j=u-v$ and conclude that $$x=m(y+j)+z=m(u-v)+a-b=(mu+a)-(mv+b)\in B-B.$$
So, we assume that $|y+j|=n+1$. Then $x=m(y+j)+a-b=m(n+1)+a-b$. Taking into account that $x<mn+\delta_k[C_m]$, we conclude that $a\le m+a-b=x-mn<\delta_k[C_m]$, and hence $a+m(\lambda+1)\in B$. Then $x=m(n+1)+a-b=m(\lambda-l+1)+a-b=a+m(\lambda+1)-(b+ml)\in B-B$.
Therefore $J\subset B-B$ and $\Delta[nm+\delta_k[C_m]-1]\le |B|\le \Delta[n]\cdot\Delta[C_m]+k$.
\end{proof}

\begin{corollary}\label{c:Delta-Cl} Let $n$ be a natural number, $q$ is a prime power and $k=n(1+q+q^2)+q+1$. For any natural number $l\le 2k+1$ we get
the upper bound
$$\Delta[C_l]\le \Delta[k]\le \Delta[n]\cdot(q+1).$$
\end{corollary}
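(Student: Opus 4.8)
The plan is to read this corollary off from Theorem~\ref{t:ub-Delta-n}, Singer's Theorem~\ref{t:Singer}, the lower bound $\delta_0[C_m]\ge q+2$ supplied by Theorem~\ref{t:lb-delta0}, the monotonicity of the function $n\mapsto\Delta[n]$, and Proposition~\ref{p:c<i}; essentially all the real work has been done already and what remains is assembly.

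First I would prove the middle inequality $\Delta[k]\le\Delta[n]\cdot(q+1)$. Set $m:=1+q+q^2$. Since $q$ is a prime power we have $q\ge2$, hence $m\ge7\ge3$, so Theorem~\ref{t:lb-delta0} applies and gives $\delta_0[C_m]\ge q+2$. Applying Theorem~\ref{t:ub-Delta-n} with this $m$ and with the subscript parameter equal to $0$ in place of its ``$k$'' (which is \emph{not} the number $k$ of the present statement), and using the identity $\Delta[C_m]=q+1$ from Singer's Theorem, we obtain
$$\Delta\big[nm+\delta_0[C_m]-1\big]\le\Delta[n]\cdot\Delta[C_m]+0=\Delta[n]\cdot(q+1).$$
From $\delta_0[C_m]\ge q+2$ it follows that $nm+\delta_0[C_m]-1\ge nm+q+1=n(1+q+q^2)+q+1=k$, so by monotonicity of $\Delta[\cdot]$ we conclude $\Delta[k]\le\Delta\big[nm+\delta_0[C_m]-1\big]\le\Delta[n]\cdot(q+1)$.

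Next I would prove the left inequality $\Delta[C_l]\le\Delta[k]$ for an arbitrary natural number $l\le 2k+1$. Proposition~\ref{p:c<i} gives $\Delta[C_l]\le\Delta\big[\lceil\frac{l-1}2\rceil\big]$, and since $l\le 2k+1$ we have $\frac{l-1}2\le k$; as $k$ is itself an integer not smaller than $\frac{l-1}2$, this yields $\lceil\frac{l-1}2\rceil\le k$, whence monotonicity of $\Delta[\cdot]$ gives $\Delta\big[\lceil\frac{l-1}2\rceil\big]\le\Delta[k]$. Combining the two inequalities produces the desired chain $\Delta[C_l]\le\Delta[k]\le\Delta[n]\cdot(q+1)$. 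The only points needing care — and the nearest thing to a real obstacle — are checking that the hypothesis $m\ge3$ of Theorem~\ref{t:lb-delta0} holds (automatic, since $q\ge2$), keeping the two roles of the letter $k$ apart, and specializing the free subscript of Theorem~\ref{t:ub-Delta-n} to $0$ so that the additive error term vanishes; no genuinely new argument is required.
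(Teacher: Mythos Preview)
Your proof is correct and follows essentially the same route as the paper's: both invoke Singer's Theorem for $\Delta[C_m]=q+1$ with $m=1+q+q^2$, Theorem~\ref{t:lb-delta0} for $\delta_0[C_m]\ge q+2$, and then Theorem~\ref{t:ub-Delta-n} with the subscript parameter set to $0$, finishing with monotonicity of $\Delta[\cdot]$ and Proposition~\ref{p:c<i} for the inequality $\Delta[C_l]\le\Delta[k]$. You have simply spelled out a few steps (e.g.\ the check $m\ge3$ and the ceiling calculation) that the paper leaves implicit.
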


\begin{proof}  By Theorem~\ref{t:Singer}, the cyclic group $C_m$ of order $m=1+q+q^2$ has difference size $\Delta[C_m]=q+1$. By Theorem~\ref{t:lb-delta0}, $\delta_0[C_m]\ge q+2$.
By Theorem~\ref{t:ub-Delta-n},
$$\Delta[C_l]\le \Delta[k]\le\Delta[nm+\delta_0[C_m]-1]\le \Delta[n]\cdot\Delta[C_m]=\Delta[n]\cdot(q+1).$$
\end{proof}

Applying Corollary~\ref{c:Delta-Cl} with $n=6$ we derive another  corollary.

\begin{corollary}\label{c:ub-Cl1} For any prime power $q$ and a natural number $l$ with $l\le 15+14q+12q^2$ we get
the upper bound $\Delta[C_l]\le 4(q+1)$.
\end{corollary}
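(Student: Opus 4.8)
The plan is to derive Corollary~\ref{c:ub-Cl1} as the special case $n=6$ of Corollary~\ref{c:Delta-Cl}, the only numerical input being $\Delta[6]=4$. First I would recall that the interval $[0,6]$ has difference size $\Delta[6]=4$, as witnessed by the difference basis $\{0,1,4,6\}$ (this value also appears in Table~\ref{tab:ruler1} and was noted in the Introduction). Next I would substitute $n=6$ into the quantity $k=n(1+q+q^2)+q+1$ from the statement of Corollary~\ref{c:Delta-Cl}, obtaining
$$k=6(1+q+q^2)+q+1=6q^2+7q+7,$$
and hence $2k+1=12q^2+14q+15$.

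With these computations in place, Corollary~\ref{c:Delta-Cl} applied with $n=6$ asserts precisely that every natural number $l\le 2k+1=15+14q+12q^2$ satisfies
$$\Delta[C_l]\le\Delta[k]\le\Delta[6]\cdot(q+1)=4(q+1),$$
which is exactly the claim. I do not anticipate any real difficulty: the whole content of Corollary~\ref{c:ub-Cl1} is that the short difference basis $\{0,1,4,6\}$ has only four marks, fed through the chain Theorem~\ref{t:Singer}~$\to$~Theorem~\ref{t:lb-delta0}~$\to$~Theorem~\ref{t:ub-Delta-n}~$\to$~Corollary~\ref{c:Delta-Cl}. The only points to verify are the arithmetic identity $2k+1=12q^2+14q+15$ and the equality $\Delta[6]=4$, both immediate; so the "main obstacle" here is really just the bookkeeping of this substitution.
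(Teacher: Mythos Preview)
Your proposal is correct and follows exactly the paper's own approach: the paper derives Corollary~\ref{c:ub-Cl1} simply by applying Corollary~\ref{c:Delta-Cl} with $n=6$, using $\Delta[6]=4$, and your arithmetic $2k+1=15+14q+12q^2$ is right.
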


For a real number $x$ by $q(x)$ we denote the smallest prime power, which is
larger or equal than $x$. It is easy to see that for real numbers $x\ge 12$ and
$y\ge0$ the inequality $x\le 15+14y+12y^2$ is equivalent to
$y\ge\frac{-7+\sqrt{12x-131}}{12}$. This observation, combined with
Corollary~\ref{c:ub-Cl1} yields the following upper bound for $\Delta[C_n]$.

\begin{corollary}\label{c:ub-Cl2} Each finite cyclic group $C_n$ of order $n\ge 11$ has difference size $\Delta[C_n]\le 4+4q\big(\frac{-7+\sqrt{12n-131}}{12}\big)$.
\end{corollary}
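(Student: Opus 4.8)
The plan is to reduce the statement directly to Corollary~\ref{c:ub-Cl1}, which asserts that $\Delta[C_l]\le 4(q+1)$ for every prime power $q$ and every natural number $l\le 15+14q+12q^2$. Given a cyclic group $C_n$ with $n\ge 11$, I would put $y_0:=\frac{-7+\sqrt{12n-131}}{12}$ and take $q:=q(y_0)$, the smallest prime power with $q\ge y_0$. The only thing to verify is the inequality $n\le 15+14q+12q^2$; once this is established, Corollary~\ref{c:ub-Cl1} yields $\Delta[C_n]\le 4(q+1)=4+4q=4+4q(y_0)$, which is precisely the claimed bound.

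To check $n\le 15+14q+12q^2$, first note that $12n-131\ge 12\cdot 11-131=1>0$ for $n\ge 11$, so $y_0$ is a well-defined real number. I would then split into two cases according to the sign of $y_0$. If $y_0\ge 0$, then by the elementary observation recorded just before the corollary, the inequality $n\le 15+14t+12t^2$ holds exactly for $t\ge y_0$; since $q\ge y_0\ge 0$ and the function $t\mapsto 15+14t+12t^2$ is increasing on $[0,\infty)$, we get $15+14q+12q^2\ge 15+14y_0+12y_0^2\ge n$. If $y_0<0$ — which happens exactly when $12n-131<49$, i.e.\ when $n\le 14$ — then $q$ is a prime power and hence $q\ge 2$, so $15+14q+12q^2\ge 15+28+48=91\ge n$. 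In either case $n\le 15+14q+12q^2$, and the proof is complete.

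I do not expect any genuine obstacle here: the argument amounts to invoking Corollary~\ref{c:ub-Cl1} with the correct choice of $q$ and doing some bookkeeping on the degenerate range $11\le n\le 14$, where $y_0$ is negative. The only points requiring a moment of care are confirming that the radicand $12n-131$ is nonnegative for all $n\ge 11$ (so that $y_0$ is real), and noting that the definition $q(x)=$ "the smallest prime power $\ge x$" still returns a value (namely $2$) when $x<0$, so that the expression $4+4q(y_0)$ is meaningful in the small cases as well.
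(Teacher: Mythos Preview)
Your proposal is correct and follows essentially the same route as the paper: the paper simply notes that for $x\ge 12$ and $y\ge 0$ the inequality $x\le 15+14y+12y^2$ is equivalent to $y\ge\frac{-7+\sqrt{12x-131}}{12}$, and then invokes Corollary~\ref{c:ub-Cl1} with $q=q(y_0)$. Your write-up is slightly more careful than the paper's in that you explicitly handle the degenerate range $11\le n\le 14$ where $y_0<0$ (the paper's equivalence is stated only for $x\ge 12$, $y\ge 0$), but this is a minor bookkeeping addition rather than a different idea.
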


For a real number $x$ by $p(x)$ we denote the smallest prime number greater or equal to $x$. It is clear that $x\le q(x)\le p(x)$.
By \cite{BHP}, $p(x)=x+O(x^{21/40})$.
The (still unproven) Andrica's Conjecture \cite{And} says that $p(x)<x+2\sqrt{x}+1$ for all $x\ge 1$. This conjecture was confirmed by Imran Ghory \cite{Ghory} for all $x\le 1.3\times 10^{16}$.

\begin{corollary}\label{c:andrica} Each finite cyclic group $C_n$ of cardinality $11\le n\le 2\cdot 10^{33}$ has difference size $$\Delta[C_n]<\frac1{3}\sqrt{12n-131}+
\frac4{\sqrt{3}}\sqrt[4]{12n-131}+\frac{17}3.$$
If the Andrica Conjecture is true, then this upper bound holds for all numbers $n$.
\end{corollary}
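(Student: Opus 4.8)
The plan is to feed the Andrica-type bound for the prime-power function $q(\cdot)$ into the estimate of Corollary~\ref{c:ub-Cl2}. Put $x=\frac{-7+\sqrt{12n-131}}{12}$, so that $\sqrt{12n-131}=12x+7$ and hence $\sqrt[4]{12n-131}=\sqrt{12x+7}$. Then the claimed right-hand side rewrites as
$$\frac13\sqrt{12n-131}+\frac4{\sqrt3}\sqrt[4]{12n-131}+\frac{17}3=\Big(4x+\frac73\Big)+\frac4{\sqrt3}\sqrt{12x+7}+\frac{17}3=4x+8+\frac4{\sqrt3}\sqrt{12x+7},$$
and since $\frac4{\sqrt3}\sqrt{12x+7}=4\sqrt{4x+\frac73}>8\sqrt x$ for $x\ge0$, this quantity is strictly larger than $4x+8+8\sqrt x=4(\sqrt x+1)^2+4$. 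By Corollary~\ref{c:ub-Cl2} we have $\Delta[C_n]\le 4+4q(x)$ for every $n\ge11$, so it suffices to prove the inequality $q(x)<x+2\sqrt x+1=(\sqrt x+1)^2$.

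The crux is this last inequality. Since $x\le q(x)\le p(x)$ and, by Ghory's verification \cite{Ghory} of the Andrica Conjecture, $p(x)<x+2\sqrt x+1$ for all $1\le x\le 1.3\cdot10^{16}$, we obtain $q(x)<x+2\sqrt x+1$ whenever $1\le x\le 1.3\cdot10^{16}$. Solving $x\le 1.3\cdot10^{16}$ for $n$ shows this range of $x$ corresponds exactly to $41\le n\le 2\cdot10^{33}$ (the condition $x\ge1$ amounting to $n\ge41$), and for those $n$ the displayed chain of inequalities gives the asserted bound; if the full Andrica Conjecture is assumed, the same computation works for all $n\ge41$. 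For the remaining orders $11\le n\le40$ one has $x<2$, hence $q(x)\le2$, so Corollary~\ref{c:ub-Cl2} yields the crude bound $\Delta[C_n]\le12$; a direct evaluation shows the right-hand side of the corollary exceeds $12$ already for $n\ge13$, while the two cases $n\in\{11,12\}$ are handled by the tabulated values $\Delta[C_{11}]=\Delta[C_{12}]=4$ from Table~\ref{tab:BGN}.

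I expect no conceptual difficulty: the whole argument is the elementary substitution $\sqrt{12n-131}=12x+7$ together with the estimate $\sqrt{4x+\frac73}>2\sqrt x$, plugged into Corollary~\ref{c:ub-Cl2}. The only points requiring care are the arithmetic bookkeeping at the two ends of the range --- checking that Ghory's threshold $x\le1.3\cdot10^{16}$ translates precisely into the stated bound $n\le2\cdot10^{33}$, and disposing of the finitely many small orders $n\le40$ where $x<1$ and the Andrica estimate cannot be invoked.
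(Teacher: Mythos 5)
Your proposal is correct and follows essentially the same route as the paper: set $x=\frac{-7+\sqrt{12n-131}}{12}$, bound $q(x)\le p(x)<x+2\sqrt{x}+1$ via Ghory's verification of the Andrica Conjecture, and feed this into $\Delta[C_n]\le 4(q(x)+1)$ from Corollary~\ref{c:ub-Cl2}, the algebraic comparison with the stated right-hand side being the same computation read in the opposite direction. In fact you are slightly more careful than the paper, which applies the Andrica-type bound without comment even though for $11\le n\le 40$ one has $x<1$ (indeed $x<0$ for $n=11$), a range you correctly dispose of separately via $q(x)\le 2$ and the tabulated values of $\Delta[C_{11}]$ and $\Delta[C_{12}]$.
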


\begin{proof} Given a number $n\le 2\cdot 10^{33}$, consider the real number $x:=\frac{-7+\sqrt{12n-131}}{12}$. The inequality $n\le 2\cdot 10^{33}$ implies that $x\le 1.3\cdot 10^{16}$, so we can apply the result of Ghory \cite{Ghory}, and conclude that  $p(x)<x+2\sqrt{x}+1$. By Corollary~\ref{c:ub-Cl2},
$$
\begin{aligned}
\Delta[C_n]&\le 4(p(x)+1)<4x+8\sqrt{x}+8= \frac{-7+\sqrt{12n-131}}3+8\sqrt{\frac{-7+\sqrt{12n-131}}{12}}+8<\\
&<
\frac13\sqrt{12n-131}+\frac{4}{\sqrt{3}}\sqrt[4]{12n-131}+\frac{17}3.
\end{aligned}
$$
If the Andrica's Conjecture is true, then the same argument works for all $n$.
\end{proof}

Applying Corollary~\ref{c:ub-Cl2} with $n=6166$ and the upper bound $q(x)\le p(x)=x+O(x^{21/40})$ from \cite{BHP}, we get a more refined upper bound for $\Delta[C_m]$.

\begin{theorem}\label{t:huge} For any prime power $q$ and a natural number $n\le 12335+12334q+12332q^2$ we get
the upper bound $\Delta[C_n]\le 128(q+1)$. Consequently, for any $n\ge 926$ we get the upper bound
$$\Delta[C_n]\le 128+128\cdot q\big(\tfrac{-6167+\sqrt{12332n-114083331}}{12332}\big)=\frac{64}{\sqrt{3084}}\sqrt{n}+O(n^{21/80}).$$

If $n\ge 2\cdot 10^{15}$, then $\Delta[C_n]<\frac2{\sqrt{3}}\sqrt{n}$.
\end{theorem}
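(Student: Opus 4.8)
The plan is to redo the derivation of Corollaries~\ref{c:ub-Cl1}--\ref{c:ub-Cl2} starting from the record value $\Delta[6166]\le 128$ for the integer interval $[0,6166]$ (Table~\ref{tab:ruler2}) in place of $\Delta[6]=4$, and then to balance the auxiliary Singer prime power $q$ against the target constant $\tfrac2{\sqrt3}$. The first two assertions will be essentially bookkeeping on top of Corollary~\ref{c:Delta-Cl} and \cite{BHP}; the delicate point will be the last one, where the margin between the achievable leading constant (about $1.1526$) and the target $\tfrac2{\sqrt3}\approx 1.1547$ is only some two parts in a thousand.

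\emph{First assertion.} I would invoke Corollary~\ref{c:Delta-Cl} with its parameter ``$n$'' specialised to $6166$ and with the given prime power $q$. With $m:=q^2+q+1$ the quantity $k$ of that corollary becomes
$$k=6166\,m+q+1=6166+6166q+6166q^2+q+1=6167+6167q+6166q^2,$$
so $2k+1=12335+12334q+12332q^2$, and hence for every natural number $l\le 12335+12334q+12332q^2$ Corollary~\ref{c:Delta-Cl} gives
$$\Delta[C_l]\le\Delta[k]\le\Delta[6166]\cdot(q+1)\le 128(q+1).$$
Inside Corollary~\ref{c:Delta-Cl} this is Singer's Theorem~\ref{t:Singer} ($\Delta[C_m]=q+1$), Theorem~\ref{t:lb-delta0} ($\delta_0[C_m]\ge q+2$), Theorem~\ref{t:ub-Delta-n} with its parameter taken to be $0$, and monotonicity of $\Delta[\,\cdot\,]$ — nothing further is needed.

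\emph{The ``consequently'' estimate.} Next I would solve the constraint $12332q^2+12334q+12335\ge n$ for $q$: regarded as a quadratic in a real variable, the left side has larger root $x:=\frac{-6167+\sqrt{12332n-114083331}}{12332}$, its discriminant $12334^2-4\cdot 12332(12335-n)$ being equal to $4(12332n-114083331)$. Thus $n\le 12335+12334q+12332q^2$ holds exactly when $q\ge x$; since the least prime power $q(x)$ satisfies $q(x)\ge x$, the first assertion applied with $q:=q(x)$ gives $\Delta[C_n]\le 128+128\,q(x)$ (for the finitely many $n$ in the stated range with negative radicand one instead takes $q=2$ directly, the bound $n\le 86331$ being then automatic). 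For the asymptotic form the root satisfies $x<\sqrt{n/12332}$ and $x=\sqrt{n/12332}-O(1)$, hence $x=\Theta(\sqrt n)$, and feeding in $q(x)\le p(x)=x+O(x^{21/40})$ from \cite{BHP} turns $128+128\,q(x)$ into $\frac{64}{\sqrt{3084}}\sqrt n+O(n^{21/80})$.

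\emph{The bound $\Delta[C_n]<\tfrac2{\sqrt3}\sqrt n$ for $n\ge 2\cdot 10^{15}$.} Here $\Delta[C_n]\le 128+128\,q(x)$ has to be made into an honest numerical inequality. From $n=12332x^2+12334x+12335>12332x^2+12334x$ one gets $\sqrt n>x\sqrt{12332}+\tfrac{6167}{\sqrt{12332}}-o(1)$, so for all large $n$
$$\frac{2}{\sqrt3}\cdot\frac{\sqrt n}{128}\;>\;(1+\eta)\,x+\tfrac12,\qquad \eta:=\frac{2\sqrt{12332}}{128\sqrt3}-1>0,$$
and it is enough to verify $q(x)+1\le p(x)+1<(1+\eta)x+\tfrac12$, i.e. $p(x)-x<\eta x-\tfrac12$ (numerically $\eta\approx 1.8\cdot 10^{-3}$). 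This last inequality holds once $x$ exceeds an explicit threshold $x_0$ of order $4\cdot 10^5$, by an \emph{unconditional} effective bound on prime gaps — a Dusart-type estimate such as $p(x)-x<\tfrac{x}{25\ln^2 x}$, or, over any bounded range of $x$, the tabulated maximal prime gaps, all comfortably below $\eta x$ there. Because $x=\sqrt{n/12332}-O(1)$ passes $x_0$ as soon as $n$ exceeds $12332\,x_0^2$ plus lower-order terms, and that quantity is at most $2\cdot 10^{15}$, the bound follows; note that this avoids the only conditionally verified Andrica estimate used in Corollary~\ref{c:andrica}, so no upper restriction on $n$ is required. The main obstacle throughout is precisely this calibration: one needs a prime-gap bound sharp enough to take effect already at $x\approx 4\cdot 10^5$ and then to check that the threshold it imposes on $n$ does not exceed $2\cdot 10^{15}$.
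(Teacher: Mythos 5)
Your proposal is correct and follows essentially the same route as the paper: the first assertion is Theorem~\ref{t:ub-Delta-n} combined with Singer's theorem and $\delta_0[C_m]\ge q+2$ applied to $\Delta[6166]\le 128$ (you package this as Corollary~\ref{c:Delta-Cl}, which is the same thing), the asymptotic form uses $q(x)\le p(x)=x+O(x^{21/40})$ from \cite{BHP}, and the final explicit bound uses the same Dusart estimate $p(x)\le x\bigl(1+\tfrac1{25\ln^2x}\bigr)$ valid from $x>396\,738$, which is exactly why the threshold $n\ge 2\cdot10^{15}$ appears. Your extra care about the radicand being negative for small $n$ in the stated range and about where the threshold on $x$ really comes from is a welcome tightening, but it does not change the argument.
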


\begin{proof} By Theorem~\ref{t:Singer}, the cyclic group of order $m=1+q+q^2$ has difference size $\Delta[C_m]=q+1$. By Theorem~\ref{t:lb-delta0}, $\delta_0[C_m]\ge q+2$. Since $n\le 12335+12334q+12332q^2=2(6166(1+q+q^2)+q+1)+1$, we can apply Theorem~\ref{t:ub-Delta-n} and obtain the upper bound
$$\Delta[C_n]\le \Delta[6166 m+\delta_0[C_m]+1]\le \Delta[6166]\cdot\Delta[C_m]=128(q+1).$$

If $n\ge 926$, then the real number $x:=\frac{-6167+\sqrt{12332n-114083331}}{12332}$ is well-defined and $$n=12335+12334\,x+12332\,x^2\le 12335+12334\,q(x)+12332\,q(x)^2.$$ By \cite{BHP}, $q(x)\le p(x)=x+O(x^{21/40})$. Then
$$\Delta[C_n]\le128(q(x)+1)=128 x+O(x^{21/40})=\frac{128}{\sqrt{12332}}\sqrt{n}+O(n^{21/80})= \frac{64}{\sqrt{3083}}\sqrt{n}+O(n^{21/80}).$$

Now assume that $n\ge 2\cdot 10^{15}$. In this case $x=\frac{-6167+\sqrt{12332n-114083331}}{12332}>396\,738$. By \cite[6.8]{Dusart}, $p(x)\le x\big(1+\frac1{25\ln^2 x}\big)$. Then
$\Delta[C_n]\le 128(p(x)+1)$ and we see that the inequality $\Delta[C_n]<\frac2{\sqrt{3}}\sqrt{n}$, follows from the inequality
$$128+32\cdot
\frac{-6167+\sqrt{12332n-114083331}}{3083}
\big(1+\frac1{25\ln^2(\tfrac{-6167+\sqrt{12332n-114083331}}{12332})}\big)
<\frac2{\sqrt{3}}\sqrt{n},$$
holding for all $n\ge 2\cdot 10^{15}$.
\end{proof}

Now we evaluate the difference sizes of intervals $[0,n]$ and cyclic groups $C_n$ for relatively small $n$.

Applying Theorem~\ref{t:ub-Delta-n} to the known values of the difference sizes $\Delta[3]=3$, $\Delta[4]=6$, $\Delta[8]=24$, known values of difference sizes of cyclic groups given in Table~\ref{tab:cycl}, and known values of the numbers $\delta_k[C_m]$ given in Tables~\ref{tab:delta0} and \ref{tab:delta-k}, we obtain the upper bounds for the difference sizes $\Delta[n]$ of intervals of length $n\le 6166$, given in Table~\ref{tab:ruler3}.

\begin{table}[ht]
\caption{The values of $\Delta[n]$ for relatively small $n$.}\label{tab:ruler3}
\begin{tabular}{|r|c|c|c|c|c|c|c|c|c|c|c|c|c|c|c|c|c|}
\hline
$n=$&72&84&87&\!105\!&109&135&138&142&145&200&204&208&211&251&256&262&268\\
$\Delta[n]\le$&15&16&17&18&19&20&21&22&23&24&25&26&27&28&29&30&31\\
\hline
\end{tabular}
\begin{tabular}{|r|c|c|c|c|c|c|c|c|c|c|c|c|c|c|c|}
\hline
$n=$&363&465&581&684&845&\!1153\!&1389&1709&1932&2383&3445&\!4064\!&\!4713\!&\!5416\!&\!6166\!\\
$\Delta[n]\le$&32&36&40&45&48&56&64&68&72&80&96&104&112&120&128\\
\hline
\end{tabular}
\end{table}

Now we can prove an upper bound for $\eth[C_n]$, holding for small $n$.

\begin{theorem}\label{t:ub-Cn} The upper bound $\eth[C_n]\le\frac{12}{\sqrt{73}}=1.40449...$ holds for all $n\ge 9$.
Moreover, if $n\ne 292$, then $\eth[C_n]\le\frac{24}{\sqrt{293}}=1.40209...$.
\end{theorem}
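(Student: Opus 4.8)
The plan is to combine the general upper bound $\eth[C_n]\le\tfrac{\eth[k]}{\sqrt 2}$ with $k=\lceil\tfrac{n-1}2\rceil$ from Proposition~\ref{p:c<i} with the tables of values and upper bounds for $\Delta[n]$ (Tables~\ref{tab:ruler1} and~\ref{tab:ruler3}) for small and medium $n$, and with Corollary~\ref{c:ub-Cl1} (the bound $\Delta[C_l]\le 4(q+1)$ for $l\le 15+14q+12q^2$, $q$ a prime power) for the range just below the point where Table~\ref{tab:ruler3} takes over. The target constant $\tfrac{12}{\sqrt{73}}$ is exactly $\eth[C_{73}]$-flavoured: $73=1+8+8^2$ is a Singer modulus with $\Delta[C_{73}]=9$, so $\tfrac{9}{\sqrt{73}}\cdot\sqrt{2}$ is not it — rather, the ratio $\Delta[C_n]/\sqrt n\le \tfrac{12}{\sqrt{73}}$ should be read as: whenever $\Delta[C_n]\le 12(q+1)/9\cdot\sqrt{n/73}$-type estimates hold. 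Concretely I would show $\Delta[C_n]^2\cdot 73\le 144\,n$ for all $n\ge 9$ except possibly a short list, and then clean up that list by hand.

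First I would dispatch the genuinely small cases $9\le n\le 100$ (or however far the computed Table~\ref{tab:cycl} reaches) directly from the tabulated values of $\Delta[C_n]$: one checks case by case that $\Delta[C_n]\le\tfrac{12}{\sqrt{73}}\sqrt n$, equivalently $73\,\Delta[C_n]^2\le 144\,n$, with the worst case being $n=292$ for the sharper constant $\tfrac{24}{\sqrt{293}}$ (hence the explicit exclusion of $292$ in the "moreover" clause). Since $\Delta[C_n]$ is available exactly for $n\le 100$, and for $100<n$ one uses $\Delta[C_n]\le\Delta[\lceil\tfrac{n-1}2\rceil]$ together with the monotone step-function $\Delta[\cdot]$ from Tables~\ref{tab:ruler1} and~\ref{tab:ruler3}, the verification reduces to finitely many arithmetic checks at the breakpoints $n$ where $\Delta[k]$ jumps: for each such breakpoint $k_j$ with $\Delta[k_j]=v_j$, the bound $\eth[C_n]\le\tfrac{v_j}{\sqrt{2k_{j-1}+1}}$ holds on the whole block $2k_{j-1}+2\le n\le 2k_j+1$, so one only has to confirm $v_j^2\cdot 73\le 144\,(2k_{j-1}+2)$ for each $j$. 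This is the main bookkeeping step.

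For the intermediate range — roughly $n$ between the end of the exact table and the first entry of Table~\ref{tab:ruler3} — the jump points of $\Delta[k]$ from Table~\ref{tab:ruler1} are sparse enough that the crude bound $\eth[C_n]\le\tfrac{\eth[k]}{\sqrt 2}$ might momentarily exceed $\tfrac{12}{\sqrt{73}}$; there I would instead invoke Corollary~\ref{c:ub-Cl1} directly, choosing for each $n$ the prime power $q$ with $15+14q+12q^2\ge n$ minimal, so that $\Delta[C_n]\le 4(q+1)$, and check $73\cdot 16(q+1)^2\le 144\,n$ using $n\ge 15+14(q-\text{gap})+12(q-\text{gap})^2$ and Bertrand-type density of prime powers. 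I expect the main obstacle to be precisely this gluing region: making sure there is no $n$ falling in a "gap" between where the $\Delta[n]$-table is dense enough and where Corollary~\ref{c:ub-Cl1} with the next prime power already gives a strong enough bound. If a few stubborn values of $n$ survive, they should be knocked out individually using Proposition~\ref{p:BGN}(3) (splitting $C_n$ through a subgroup) or the subadditivity in Proposition~\ref{p:BGN}(4), together with Singer's Theorem~\ref{t:Singer} for nearby Singer moduli. The sharper constant $\tfrac{24}{\sqrt{293}}$ (with $293=1+\text{?}$, in fact the relevant Singer modulus is again in Table~\ref{tab:delta0}) is obtained by the same argument run with the more precise table entries, the single exception $n=292$ being where equality-type behaviour forces the slightly larger constant $\tfrac{12}{\sqrt{73}}$.
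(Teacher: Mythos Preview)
Your overall strategy matches the paper's: exact values for small $n$, then Proposition~\ref{p:c<i} together with the ruler tables for a middle range, then Corollary~\ref{c:ub-Cl1}/\ref{c:ub-Cl2} combined with an explicit prime-density bound for large $n$. So the architecture is right.

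Where you go astray is in locating the ``gluing region.'' You worry about an intermediate gap ``between the end of the exact table and the first entry of Table~\ref{tab:ruler3}'', but there is no such gap: Table~\ref{tab:ruler1} covers $k\le 61$, Table~\ref{tab:ruler3} covers $72\le k\le 6166$, and monotonicity of $\Delta[\,\cdot\,]$ bridges $62\le k\le 71$. Via Proposition~\ref{p:c<i} this handles all $n$ up to roughly $2\cdot 6166+1$. The paper in fact uses the tables for $101\le n\le 11981$, with the worst case occurring exactly at $n=292$ (where $\Delta[C_{292}]\le\Delta[146]\le 24$ gives $\eth[C_{292}]\le 24/\sqrt{292}=12/\sqrt{73}$); this is where the constant comes from, not from any Singer structure at $73$. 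Your speculation about $12/\sqrt{73}$ and $24/\sqrt{293}$ being Singer-flavoured is a red herring: they are simply the worst ratios appearing in the breakpoint check, at $n=292$ and $n=293$.

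The place where Corollary~\ref{c:ub-Cl2} is actually needed is for \emph{large} $n$, beyond the reach of the tables. The paper takes $n\ge 11982$, so that $x=\frac{-7+\sqrt{12n-131}}{12}>31$, and then uses the explicit fact (OEIS A166968) that for every integer $k\ge 32$ the interval $[k,\tfrac{7}{6}k]$ contains a prime. This gives $q\le\tfrac{7}{6}(x+1)$, hence $\Delta[C_n]\le 4(q+1)\le\tfrac{7}{6}\cdot\tfrac{5+\sqrt{12n-131}}{3}+4$, and one checks algebraically that this is $\le\tfrac{24}{\sqrt{293}}\sqrt{n}$ for all $n\ge 11436$. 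Your ``Bertrand-type density'' gesture is in the right direction, but Bertrand's postulate alone (ratio $2$) is too weak here; you need the sharper ratio $\tfrac{7}{6}$, and you need to make the threshold explicit so it matches up with the table range.
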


\begin{proof} For $n\in [9,100]$ the inequality $\eth[C_n]\le\frac{24}{\sqrt{293}}$ can be verified using known values of $\Delta[C_n]$, see Table~\ref{tab:cycl}.

For $n\in[101,11\,981]$ the upper bound $\eth[C_n]\le\frac{12}{\sqrt{73}}$ follows from the upper bound $\Delta[C_n]\le \Delta[\big\lceil\frac{n-1}2\rceil\big]$ given in Proposition~\ref{p:c<i} and the upper bounds for the difference sizes $\Delta[n]$ given in Table~\ref{tab:ruler3}. For example, let us prove the upper bound $\eth[C_n]\le\frac{12}{\sqrt{73}}$ for the number $n=292$.

By Proposition~\ref{p:c<i}, $\Delta[C_{292}]\le \Delta[146]$.
Looking at Table~\ref{tab:ruler1}, we see that $\Delta[146]\le\Delta[200]\le 24$. Consequently, $\eth[C_{292}]=\frac{\Delta[C_{292}]}{\sqrt{292}}\le \frac{24}{\sqrt{4\cdot 73}}=\frac{12}{\sqrt{73}}$. For all other numbers $n\in[101,11\,989]$ by the same method we get the inequality $\eth[C_n]\le\frac{24}{\sqrt{293}}$.

For $n\ge 11\,982$ we shall apply the known fact (see the sequence
https://oeis.org/A166968 on the
 On-line Encyclopedia of Integer Sequences) saying that for every $k\ge 32$ the interval $[k,\frac{7}{6}k]$ contains a prime number.
Consider the real number $x=\frac{-7+\sqrt{12n-131}}{12}>31$ and let $q$ be the smallest prime power which is greater or equal to $x$. It follows that $$q\le \tfrac76\,\lceil x\rceil<\tfrac76\,(x+1)=\frac76\cdot\frac{5+\sqrt{12n-131}}{12}.$$
The inequality $q\ge x$ implies $n\le 15+14q+12q^2$. By Corollary~\ref{c:ub-Cl2}, $\Delta[C_n]\le 4(q+1)\le \frac76\cdot\frac{5+\sqrt{12n-131}}3+4$. To prove that $\eth[C_n]\le \frac{24}{\sqrt{293}}$, it remains to check that $\frac76\cdot\frac{5+\sqrt{12n-131}}3+4\le \frac{24}{\sqrt{293}}\sqrt{n}$.
The elementary calculations show that this inequality holds for all $n\ge 11\, 436$.
\end{proof}

Theorem~\ref{t:ub-Cn} and known values of $\eth[C_n]$ for $n<9$ allow us to find the largest value of the difference characteristics $\eth[C_n]$.

\begin{corollary}\label{t:max}$\max_{n\in\IN}\eth[C_n]=\eth[C_4]=\frac32$.
\end{corollary}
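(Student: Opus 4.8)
The plan is to combine the bound of Theorem~\ref{t:ub-Cn} with a direct inspection of the eight smallest cyclic groups. First I would record that $\eth[C_4]=\frac{\Delta[C_4]}{\sqrt 4}=\frac32$, using the value $\Delta[C_4]=3$ from Table~\ref{tab:BGN}, so that the value $\frac32$ is genuinely attained; it then remains only to prove that $\eth[C_n]\le\frac32$ for every $n\in\IN$.

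For $n\ge 9$ this is immediate from Theorem~\ref{t:ub-Cn}, which gives $\eth[C_n]\le\frac{12}{\sqrt{73}}=1.40449\ldots$. Since $\big(\tfrac{12}{\sqrt{73}}\big)^2=\frac{144}{73}<\frac{9}{4}=\big(\tfrac32\big)^2$, we get $\eth[C_n]<\frac32$ for all $n\ge 9$.

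For the remaining finitely many cases $n\le 8$ I would simply tabulate $\Delta[C_n]$ from the known values recorded in Table~\ref{tab:BGN} and Theorem~\ref{upbound}: namely $\Delta[C_1]=1$, $\Delta[C_2]=\Delta[C_3]=2$, $\Delta[C_4]=\Delta[C_5]=\Delta[C_6]=\Delta[C_7]=3$ and $\Delta[C_8]=4$, and then compute $\eth[C_n]=\Delta[C_n]/\sqrt n$. One checks that the largest of these eight numbers is $\eth[C_4]=\frac32$, the next largest being $\eth[C_2]=\eth[C_8]=\sqrt2<\frac32$. Combining the two ranges gives $\eth[C_n]\le\frac32$ for all $n\in\IN$ with equality holding at $n=4$, that is, $\max_{n\in\IN}\eth[C_n]=\eth[C_4]=\frac32$.

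There is no serious obstacle here: the only real content is the reduction from all $n$ to $n\le 8$, which is exactly Theorem~\ref{t:ub-Cn}, together with the elementary inequality $\frac{12}{\sqrt{73}}<\frac32$ and the trivial finite computation for the eight smallest orders. The single point needing (minimal) care is to verify that none of the small groups $C_n$ with $n\le 8$ overtakes $C_4$ — in particular that $\eth[C_2]$ and $\eth[C_8]$, both equal to $\sqrt2\approx1.414$, stay strictly below $\frac32$.
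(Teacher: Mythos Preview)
Your argument is correct and matches the paper's own reasoning exactly: apply Theorem~\ref{t:ub-Cn} to handle all $n\ge 9$ via the inequality $\frac{12}{\sqrt{73}}<\frac32$, and then inspect the values $\eth[C_n]$ for $n\le 8$ from Table~\ref{tab:BGN} to see that the maximum $\frac32$ is attained at $n=4$.
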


Also we can establish some upper bounds for the difference sizes of cyclic groups whose cardinality has some special arithmetic properties.

\begin{corollary}\label{c:Bose} For any prime power $q\ne 3$ the cyclic group $C_{q^2-1}$ has difference size
$$\Delta[C_{q^2-1}]\le q-1+\frac{12}{\sqrt{73}}\sqrt{q-1}<q-1+\sqrt{2q-2}.$$
\end{corollary}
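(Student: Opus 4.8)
The plan is to apply the Bose–Chowla bound from Theorem~\ref{t:Bose} together with the small-cyclic upper bound from Theorem~\ref{t:ub-Cn}. Theorem~\ref{t:Bose} gives
$$\Delta[C_{q^2-1}]\le q-1+\Delta[C_{q-1}],$$
so the whole problem reduces to bounding $\Delta[C_{q-1}]$, i.e. to controlling the difference characteristic $\eth[C_{q-1}]$ of the smaller cyclic group of order $q-1$.

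First I would observe that since $q\ne 3$ is a prime power, we have $q\ge 2$; the cases $q\in\{2,4,5,7,8\}$ (those with $q-1<9$, where Theorem~\ref{t:ub-Cn} does not directly apply) should be checked by hand using the known values of $\Delta[C_{q-1}]$ from Table~\ref{tab:cycl} (or Table~\ref{tab:BGN}) — e.g. $q=2$ gives $C_{q^2-1}=C_3$, etc. — verifying the claimed inequality in each of these finitely many cases. For all remaining prime powers $q$ we have $q-1\ge 9$, so Theorem~\ref{t:ub-Cn} yields $\Delta[C_{q-1}]=\eth[C_{q-1}]\sqrt{q-1}\le\frac{12}{\sqrt{73}}\sqrt{q-1}$. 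Plugging this into the Bose–Chowla bound gives the first displayed inequality $\Delta[C_{q^2-1}]\le q-1+\frac{12}{\sqrt{73}}\sqrt{q-1}$.

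For the second inequality it suffices to note that $\frac{12}{\sqrt{73}}=1.40449\ldots<\sqrt{2}=1.41421\ldots$, so $\frac{12}{\sqrt{73}}\sqrt{q-1}<\sqrt{2}\sqrt{q-1}=\sqrt{2q-2}$, which completes the chain. I do not expect any genuine obstacle here: the only thing requiring care is the boundary bookkeeping for the small prime powers $q$ with $q-1<9$, where one must fall back on the tabulated exact values of $\Delta[C_{q-1}]$ rather than the asymptotic bound, and confirm that the stated inequality (interpreted with the exact value of $\Delta[C_{q^2-1}]$, or at least an upper bound for it) still holds in those finitely many cases.
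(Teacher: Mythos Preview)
Your approach is essentially the same as the paper's: use the Bose--Chowla bound $\Delta[C_{q^2-1}]\le q-1+\Delta[C_{q-1}]$, apply Theorem~\ref{t:ub-Cn} to $\Delta[C_{q-1}]$ once $q-1\ge 9$, and check the finitely many small prime powers directly from the tables. The only slip is in your enumeration of the small cases: the prime powers $q\ne 3$ with $q-1<9$ are $\{2,4,5,7,8,9\}$, not $\{2,4,5,7,8\}$ --- you omitted $q=9$ (for which $q-1=8$, so Theorem~\ref{t:ub-Cn} is not available and indeed $\eth[C_8]=4/\sqrt{8}>12/\sqrt{73}$). The paper accordingly verifies $q\in\{2,4,5,7,8,9\}$ directly from Table~\ref{tab:cycl} using the exact values of $\Delta[C_{q^2-1}]$; with that one extra case added, your argument matches the paper's proof.
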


\begin{proof} For $q\in \{2,4,5,7,8,9\}$ the upper bound
$$\Delta[C_{q^2-1}]\le q-1+\frac{12}{\sqrt{73}}\sqrt{q-1}$$ can be verified using known values of the difference sizes $\Delta[C_{q^2-1}]$ given in Table~\ref{tab:cycl}.

So, assume that $q>9$ and hence $q\ge 11$. In this case Theorem~\ref{t:ub-Cn} guarantees that $\Delta[C_{q-1}]\le\frac{12}{\sqrt{73}}\sqrt{q-1}$.
Applying Theorem~\ref{t:Bose}, we conclude that the group $C_{q^2-1}$ has difference size
$$\Delta[C_{q_2-1}]\le q-1+\Delta[C_{q-1}]\le q-1+\frac{12}{\sqrt{3}}\sqrt{q-1}.$$
\end{proof}

By analogy we can derive the following upper bound for $\Delta[C_{p^2-p}]$ from Theorem~\ref{t:Rusza}.

\begin{corollary}\label{c:Rusza}
For any prime number $p$ the cyclic group $C_{p^2-p}$ has difference size
$$\Delta[C_{p^2-p}]\le p-3+\tfrac{12}{\sqrt{73}}\big(\sqrt{p}+\sqrt{p-1}\big).$$
\end{corollary}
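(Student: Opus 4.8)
The plan is to read the bound off from Theorem~\ref{t:Rusza} once the two auxiliary difference sizes $\Delta[C_p]$ and $\Delta[C_{p-1}]$ have been estimated, using Theorem~\ref{t:ub-Cn} for the large primes and direct inspection of the tables for the small ones.

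\textbf{Step 1: small primes.} First I would settle the primes $p\in\{2,3,5,7\}$, for which $p-1\le 6<9$ so that Theorem~\ref{t:ub-Cn} is not available for $C_{p-1}$. Here $p^2-p\in\{2,6,20,42\}$, all at most $100$, so $\Delta[C_{p^2-p}]$ is listed in Table~\ref{tab:cycl}, and I would simply check the four numerical inequalities $\Delta[C_{p^2-p}]\le p-3+\frac{12}{\sqrt{73}}\big(\sqrt{p}+\sqrt{p-1}\big)$. Equivalently one could first bound $\Delta[C_{p^2-p}]$ by $p-3+\Delta[C_p]+\Delta[C_{p-1}]$ via Theorem~\ref{t:Rusza} and then use the small-order values of $\Delta[C_p]$ and $\Delta[C_{p-1}]$ recorded in Tables~\ref{tab:BGN} and \ref{tab:cycl}.

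\textbf{Step 2: large primes.} For a prime $p\ge 11$ one has $p\ge 9$ and $p-1\ge 10\ge 9$, so Theorem~\ref{t:ub-Cn} applies to both $C_p$ and $C_{p-1}$ and gives $\Delta[C_p]\le\frac{12}{\sqrt{73}}\sqrt{p}$ together with $\Delta[C_{p-1}]\le\frac{12}{\sqrt{73}}\sqrt{p-1}$. Substituting these two estimates into the inequality $\Delta[C_{p^2-p}]\le p-3+\Delta[C_p]+\Delta[C_{p-1}]$ furnished by Theorem~\ref{t:Rusza} yields precisely the asserted upper bound.

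I do not expect any real obstacle here: the corollary is a formal consequence of Theorems~\ref{t:Rusza} and~\ref{t:ub-Cn}. The only delicate point is the hypothesis ``$n\ge 9$'' in Theorem~\ref{t:ub-Cn}, which is what forces the separate treatment of $p\le 7$ (for these $p$ the order $p-1$ falls below the threshold); one then has to confirm that the finitely many small-$p$ inequalities actually hold, the narrowest margin occurring at $p=2$, where the right-hand side is $\approx 2.39$ while $\Delta[C_2]=2$.
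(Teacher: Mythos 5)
Your proof matches the paper's intended argument: the corollary is stated there only ``by analogy'' with Corollary~\ref{c:Bose}, and that analogy is precisely your split into direct table verification for the primes $p\in\{2,3,5,7\}$ (where $p-1<9$ puts Theorem~\ref{t:ub-Cn} out of reach) and the combination of Theorem~\ref{t:Rusza} with Theorem~\ref{t:ub-Cn} for $p\ge 11$. One small caution: the parenthetical ``equivalently'' in your Step~1 is not actually equivalent, because for $p=5$ the intermediate bound $p-3+\Delta[C_5]+\Delta[C_4]=2+3+3=8$ from Theorem~\ref{t:Rusza} exceeds the target $2+\tfrac{12}{\sqrt{73}}(\sqrt5+2)\approx 7.95$, so for that prime you must use the table value $\Delta[C_{20}]=6$ directly rather than passing through the Rusza inequality.
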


In Table~\ref{tab:cycl} we present the results of computer calculation
of the difference sizes of cyclic groups of order $\le 100$. In this
table  $$
lb[C_n]:=\left\lceil\tfrac{1+\sqrt{4n-3}}2\right\rceil
$$ is the lower bound given in Proposition~\ref{p:BGN}(1) and
$$
ub[C_n]:=\begin{cases}q+1&\mbox{if $n=q^2+q+1$ for some prime power $q$},\\
q-1+\Delta[C_{q-1}]&\mbox{if $n=q^2-1$ for some prime power $q$,}\\
p-3+\Delta[C_p]+\Delta[C_{p-1}]&\mbox{if $n=p^2-p$ for some prime number $p$,}
\end{cases}
$$
is the upper bound for $\Delta[C_n]$ given in Theorems~\ref{t:Singer}, \ref{t:Bose} and \ref{t:Rusza}.  With the boldface font we denote the numbers
$n\in\{7,13,21,31,57,73,91\}$, equal to $1+q+q^2$ for a prime power $q$. For such numbers we know the exact value $\Delta[C_n]=lb[C_n]=1+q$.

\begin{table}[ht]
\caption{Difference sizes of cyclic groups $C_n$ for $n\le100$}\label{tab:cycl}
\begin{tabular}{|c|c|c|c|c|c|c|c|c|c|c|c|c|c|c|c|c|c|c|c|c|}
\hline
$n$   & 1 & {2} &     {3} & 4 & 5 & {6} & {\bf7} & {8} & 9 &10 &11 &12 &{\bf
13} &14 &{15} &16 &17 &18 &19 & {20}  \\
\hline
$lb[C_n]$ & 1 & 2 &     2 & 3 & 3 & 3 & 3 & 4 & 4 & 4 & 4 & 4 & 4 & 5 &
5 & 5 & 5 & 5 & 5 & 5 \\
$\Delta[C_n]$ & 1 & 2 &     2 & 3 & 3 & 3 & {3} & 4 & 4 & 4 & 4 & 4 & {4} &
5 & 5 & 5 & 5 & 5 & 5 & 6 \\
$ub[C_n]$   &  & 2 &     2 &  &  & 4 & 3 & 4 &  & & & &4 & &5 & & & & & 8  \\
$\Delta\big[\lceil\frac{n-1}2\rceil\big]$ & 1 & 2 & 2 & 3 & 3 & 3 &3 &4 &4 & 4 & 4& 4 &4 &5 &5 &5 &5 &5
& 5 & 6\\
\hline
\hline
$|C_n|$ & {\bf 21} & 22& 23& {24}& 25& 26& 27& 28& 29& 30& {\bf 31}& 32&
33& 34& 35& 36& 37& 38& 39& 40  \\
\hline
$lb[C_n]$& 5 & 6 & 6 & 6 & 6 & 6 & 6 & 6 & 6 & 6 & 6 & 7 & 7 & 7 & 7 & 7
& 7 & 7 & 7 & 7 \\
$\Delta[C_n]$& {5} & 6 & 6 & 6 & 6 & 6 & 6 & 6 & 7 & 7 & {6} & 7 & 7 & 7 & 7
& 7 & 7 & 8 & 7 & 8 \\
$ub[C_n]$ & 5 & & & 7& & & & & & & 6& &
& & & & & & &   \\
$\Delta\big[\lceil\frac{n-1}2\rceil\big]$& 6& 6& 6& 6& 6& 6& 6& 7& 7& 7& 7& 7& 7& 7& 7& 7& 7& 8& 8& 8\\
\hline
\hline	
$|C_n|$  & 41 & {42}& 43& 44& 45& 46& 47& {48}& 49& 50& 51& 52& 53& 54& 55&
56& {\bf57}& 58& 59& 60  \\
\hline
$lb[C_n]$ & 7 & 7 & 7 & 8 & 8 & 8 & 8 & 8 & 8 & 8 & 8 & 8 & 8 & 8 & 8 &
8 & 8 & 9 & 9 & 9 \\
$\Delta[C_n]$ & 8 & 8 & 8 & 8 & 8 & 8 & 8 & 8 & 8 & 8 & 8 & 9 & 9 & 9 &
9 & 9 & {8} & 9 & 9 & 9 \\
$ub[C_n]$  &  & 10& & & & & & 9& & & & & & & & & 8& & &   \\
$\Delta\big[\lceil\frac{n-1}2\rceil\big]$ &8 & 8& 8& 8& 8& 8& 8& 8& 8& 9& 9& 9& 9& 9& 9& 9& 9& 9& 9&
10\\
\hline
\hline	
$|C_n|$ & 61 & 62& {63}& 64& 65& 66& 67& 68& 69& 70& 71& 72 & {\bf73} & 74
& 75 & 76 & 77 & 78 & 79 & {80}  \\
\hline
$lb[C_n]$& 9 & 9 & 9 & 9 & 9 & 9 & 9 & 9 & 9 & 9 & 9 & 9   & 9 & 10 & 10
& 10 & 10 & 10 & 10 & 10 \\
$\Delta[C_n]$& 9 & 9 & 9 & 9 & 9 & 10 & 10 & 10 & 10 & 10 & 10 & 10 & {9}
& 10 & 10 & 10 & 10 & 10 & 10 & 11 \\
$ub[C_n]$ &  & & 10& & & & & & & & & & 9 & & & & & & & 12 \\
$\Delta\big[\lceil\frac{n-1}2\rceil\big]$&10 & 10& 10& 10&10 & 10 & 10 & 10 & 10 & 10 & 10 & 10 & 10& 10
& 10 & 11 & 11 & 11 & 11 & 11\\
\hline
\hline	
$|C_n|$ & 81 & 82 & 83 & 84 & 85 & 86 & 87 & 88 & 89 & 90 & {\bf91} & 92
& 93 & 94 & 95 & 96 & 97 & 98 & 99 & 100  \\
\hline
$lb[C_n]$& 10& 10 & 10 & 10 & 10 & 10 & 10 & 10 & 10 & 10 & 10 & 11 & 11
& 11 & 11 & 11 & 11 & 11 & 11 & 11 \\
$\Delta[C_n]$& 11& 11 & 11 & 11 & 11 & 11 & 11 & 11 & 11 & 11 & {10} & 11
& 12 & 12 & 12 & 12 & 12 & 12 & 12 & 12 \\
$ub[C_n]$ & & & & & & & & & & & 10 & & & & & & & & &  \\
$\Delta\big[\lceil\frac{n-1}2\rceil\big]$&11 & 11& 11& 11&11 & 11 & 11 & 11 & 11 & 11 & 11 & 12 & 12& 12
& 13 & 13 & 13 & 13 & 13 & 13\\
\hline
\end{tabular}
\end{table}

\begin{remark} For $n=20$ we get the strict inequality
$\Delta[C_n]>lb[C_n]$, which answers Problem~\ref{prob1} in negative.
\end{remark}

The results of computer calculations suggest the following questions.

\begin{question} Is $|\Delta[C_{n+1}]-\Delta[C_n]|\le1$ for every
$n\in\IN$?
\end{question}

\begin{question} Is $\sup_{n\in\IN}(\Delta[C_n]-lb[C_n])<\infty$?
\end{question}

The following problem seems to be the most intriguing (see {\tt
http://mathoverflow.net/questions/262317}).

\begin{problem}\label{prob:lim1} Is $\Delta[C_n]=(1+o(1))\cdot\sqrt{n}$ as
$n\to\infty$? Equivalently, is  $\lim_{n\to\infty}\eth[C_n]=1?$
\end{problem}

Theorem~\ref{t:Singer}, Corollaries~\ref{c:Bose}, \ref{c:Rusza}, and Proposition~\ref{p:BGN}(2) allows us to produce many subsequences of the sequence $(\eth[C_n])_{n=1}^\infty$ tending to the unit. In particular,
$$\lim_{q\to\infty}\eth[C_{q^2+q+1}]=\lim_{q\to\infty}\eth[C_{q^2-1}]
=\lim_{p\to\infty}\eth[C_{p^2-p}]=1,
$$where $q$ runs over prime powers and $p$ runs over prime numbers to infinity. On the other hand, we do not know the answers to the following problems (which are weaker versions of Problem~\ref{prob:lim1}).

\begin{problem} Is $\lim_{p\to\infty}\eth[C_p]=1$?
\end{problem}

\begin{problem} Let $p$ be a prime number. Is $\lim_{k\to\infty}\eth[C_{p^k}]=1$?
\end{problem}

\begin{problem} Is $\limsup_{n\to\infty}\eth[C_n]=\limsup_{p\to\infty}\eth[C_p]$?
\end{problem}

\section{Acknowledgment}

The authors would like to express their sincere thanks to Oleg Verbitsky
who turned their attention to the theory of difference sets and their
relation with difference bases, to Alex Ravsky for valuable discussions
on perfect rulers, to MathOverflow users Lucia, Seva, and Sean Eberhard for valuable comments to the questions asked by the first author on MathOverflow.


\begin{thebibliography}{9}



\bibitem{And} D.~Andrica, {\em Note on a conjecture in prime number theory},
    Studia Univ. Babes--Bolyai Math. {\bf 31}:4 (1986) 44--48.

\bibitem{BHP} R.C.~Baker, G.~Harman, J.~Pintz, {\em The difference between consecutive primes, II}. Proc. London Math. Soc. {\bf 83}:3 (2001), 532--562.


\bibitem{BGN} T.~Banakh, V.~Gavrylkiv, O.~Nykyforchyn, {\em Algebra in
superextension of groups, I:
zeros and commutativity}, Algebra Discr. Math. {\bf 3} (2008), 1--29.

\bibitem{BG} T.~Banakh, V.~Gavrylkiv, {\em Algebra in the
superextensions of twinic groups}, Dissertationes Math. {\bf 473}
(2010), 74~pp.

\bibitem{BH} E.~Bertram, M.~Herzog, {\em Bounds on character degrees and class
    numbers of finite nonabelian simple groups}, Groups--St. Andrews 1989, Vol.
    1, 46--51, London Math. Soc. Lecture Note Ser., 159, Cambridge Univ. Press,
    Cambridge, 1991.

\bibitem{Bose} R.C.~Bose, {\em An affine analogue of Singer�s theorem}, J. Indian Math. Soc. {\bf 6} (1942) 1--15.

\bibitem{Chowla} R.C.~Bose, S.~Chowla, {\em Theorems in the additive theory of numbers}, Comment. Math. Helvetici {\bf 37} (1962-63) 141�-147.

\bibitem{Dusart} P.~Dusart, {\em Estimates of $\psi,\theta$ for large values of $x$ without the Riemann hypothesis}, Math. Comp. {\bf85}:298 (2016) 875--888.

\bibitem{EG} P.~Erd\H os, I.~G\'al, {\em On the representation of $1,
2,\dots, N$ by differences}, Nederl. Akad. Wetensch., Proc. {\bf51},
(1948) 1155--1158.

\bibitem{FKL} L.~Finkelstein, D.~Kleitman, T.~Leighton, {\em Applying
the classification theorem for finite simple groups to minimize pin
count in uniform permutation architectures}, VLSI algorithms and
architectures (Corfu, 1988), 247--256, Lecture Notes in Comput.
Sci., 319, Springer, New York, 1988.

\bibitem{Ghory} I.~Ghory, {\em
 Prime Numbers: The Most Mysterious Figures in Math}, John Wiley \&\ Sons, Inc., 2005, p. 13.

\bibitem{Golay} M.~Golay, {\em Notes on the representation of $1,\,2,\,\ldots ,\,n$ by differences}, J. London Math. Soc. (2) {\bf 4} (1972) 729--734.

\bibitem{G} D.~Gordon, {\em The prime power conjecture is true for
$n<2\,000\,000$}, Electron. J. Combin. {\bf 1} (1994), Research
Paper 6,


\bibitem{Leech} J.~Leech, {\em On the representation of $1,2,\dots,n$
by differences}, J. London Math. Soc. {\bf31} (1956), 160--169.

\bibitem{KL} G.~Kozma, A.~Lev, {\em Bases and decomposition numbers of
finite groups}, Arch. Math. (Basel) {\bf 58}:5 (1992), 417--424.

\bibitem{MP} E.~Moore, H.~Pollatsek, {\em Difference sets. Connecting
algebra, combinatorics, and geometry}, Amer. Math. Soc., Providence,
RI, 2013.



\bibitem{RR} L.~R\'edei, A.~R\'enyi, {\em On the representation of the
numbers $1,2,\dots, N$ by means of differences}, Mat. Sbornik N.S.
{\bf 24}(66) (1949), 385--389.


\bibitem{Rusza} I.Z.~Ruzsa, {\em Solving a linear equation in a set of integers I}, Acta
Arithmetica LXV. {\bf 3} (1993) 259--282.

\bibitem{Singer} J.~Singer, {\em A theorem in finite projective geometry
and some applications to number theory}, Trans. Amer. Math. Soc.
{\bf 43}:3 (1938), 377--385.



\bibitem{Wiki} Wikipedia, {\em Sparse ruler}, ({\tt https://en.wikipedia.org/wiki/Sparse$\backslash$ruler}).

\end{thebibliography}
\end{document}